\documentclass[11pt,a4paper]{article}

\usepackage[T1]{fontenc}
\usepackage[utf8]{inputenc}
\usepackage{lmodern}
\usepackage[breaklinks]{hyperref}
\usepackage{graphicx}
\usepackage{lipsum}
\usepackage{amsmath}
\usepackage{mathtools}
\usepackage{float}
\usepackage{textcomp}
\usepackage{amscd}
\usepackage{pgfplots}
\usepackage{amsfonts}
\usepackage{amssymb}
\usepackage{amsthm}
\usepackage{exscale}
\usepackage{mathrsfs}
\usepackage{slashed}
\usepackage{stmaryrd}
\usepackage{tikz}
\usetikzlibrary{
    calc,
    intersections,
    angles,
    quotes,
    arrows.meta,
    decorations.markings
}
\usepackage{tikz-cd}
\usepackage{enumitem}
\usepackage{accents}
\theoremstyle{definition}
\newtheorem{thm}{Theorem}[section]

\newtheorem{prop}[thm]{Proposition}

\theoremstyle{remark}
\newtheorem{rem}[thm]{Remark}

\numberwithin{equation}{section}

\usepackage{accents}
\newcommand{\MG}{\underaccent{\longsim}{\mathcal{M}}} 
\newcommand{\longsim}{\mathrel{\scalebox{1.3}[0.8]{\ensuremath{\sim}}}}

\newcommand{\pic}[1]{\textrm{Pic}\left(#1\right)}

\def\XXint#1#2#3{{\setbox0=\hbox{$#1{#2#3}{\int}$}
\vcenter{\hbox{$#2#3$}}\kern-.5\wd0}}

\newcommand{\RightAngle}[5]{%
  \coordinate (RAa) at ($(#1)!#4!(#2)$);
  \coordinate (RAb) at ($(#1)!#5!(#3)$);
  \coordinate (RAc) at ($(RAa)+(RAb)-(#1)$);
  \draw (RAa)--(RAc)--(RAb);
}
\newcommand{\QuoteAngle}[6]{%
  \draw
    pic[
      draw,
      #6,
      angle radius=#4cm,
      angle eccentricity=1.35,
      "#5"
    ]
    {angle=#2--#1--#3};
}
\newcommand{\PerpMark}[5]{%
  \coordinate (PM) at ($(#1)!#3!(#2)$);
  \draw[#5]
    ($(PM)!#4!90:(#2)$) --
    ($(PM)!#4!-90:(#2)$);
}

\usepackage[framemethod=TikZ]{mdframed}
\newmdenv[
  backgroundcolor=gray!10,
  linecolor=black,
  linewidth=1pt,
  roundcorner=5pt,
  skipabove=10pt,
  skipbelow=10pt
]{boxedquote}
\usepackage{pdflscape}
\usepackage{adjustbox}
\usepackage[most]{tcolorbox}
\newtcolorbox{summarybox}{
  title=Concetti chiave,
  colback=gray!10,
  colframe=black,
  fonttitle=\bfseries
}

\title{On the Reconstruction of SAS from Other Triangle Congruence Criteria}
\author{Roberto Volpe}
\date{September 11, 2026}

\begin{document}

\maketitle

\begin{abstract}
Starting from a Hilbert plane and removing the Side-Angle-Side (SAS) congruence axiom, we
investigate to what extent SAS can be recovered synthetically from the remaining classical
triangle congruence criteria. We show that the Angle-Side-Angle criterion, together with a
ray correspondence principle corresponding to Theorem 13 of Hilbert's \emph{Grundlagen der
Geometrie}, suffices to reconstruct SAS. We further show that both the Side-Side-Side and
the Side-Angle-Angle criteria also suffice, once combined with the ray correspondence
principle and suitable auxiliary principles -- the existence of midpoints and a
hypotenuse-angle criterion for right triangles in the first case, and the existence of angle
bisectors, the congruence of supplements of congruent angles, and the Pons Asinorum in the
second. Although the two routes rely on auxiliary principles of different character, we show
that they converge on a single final argument once a common hypotenuse-angle criterion is
established. A metamathematical analysis, based on an explicit model adapted from Hilbert's
own independence construction, complements these reconstructions: it shows that the ray
correspondence principle alone cannot reconstruct any of the classical criteria, and that the
Pons Asinorum and the hypotenuse-angle criterion are each independent of the remaining
auxiliary principles used in their respective reconstructions. The resulting picture is not a
formal hierarchy of the congruence criteria, but it does show that the Angle-Side-Angle
reconstruction rests on a provably more economical basis than those obtained from
Side-Side-Side or Side-Angle-Angle.
\end{abstract}

\section{Introduction}\label{PEN_s1}

The axiomatic foundations of Euclidean geometry are closely connected with the problem of determining which geometric principles are genuinely independent and which can be derived from more elementary assumptions. Hilbert's \emph{Foundations of Geometry} provided a systematic axiomatic framework in which incidence, order, congruence, and continuity are separated into distinct groups of axioms. In the congruence group, the side-angle-side criterion (SAS) occupies a particularly important position: it is introduced as an axiom rather than derived from the preceding congruence axioms. Hilbert also proved that this axiom is independent of the remaining axioms of congruence by constructing a model in which the other axioms hold while SAS fails. \cite{Hilbert1950}

In modern terminology, following \cite{Hartshorne2000}, the system consisting of the incidence, betweenness, and congruence axioms is usually referred to as a \emph{Hilbert plane}. We denote this Hilbert plane by $\MG$, with \ref{C6} denoting the SAS triangle congruence axiom. $\MG^{-}$ is the Hilbert-plane system without SAS. This provides a natural setting for investigating the deductive role of SAS within $\MG^{-}$.

The question addressed here is the following: \emph{if SAS is removed from $\MG$, to what extent can it be recovered from the other classical triangle congruence criteria?} More precisely, using a purely synthetic approach we investigate whether each of the triangle congruence criteria can replace SAS in $\MG^{-}$, and which additional geometric principles are sufficient to reconstruct SAS. We adopt the usual notation ASA, SSS, and SAA for the corresponding
triangle congruence criteria.

This question is related to previous investigations by Donnelly in the metric framework of Birkhoff. In absolute geometry, he studied the replacement of SAS by SAA and SSS \cite{Donnelly2010,Donnelly2013,Donnelly2015}. He also investigated the non-redundancy of SAS in a non-continuous setting \cite{Donnelly2019}.

Other approaches have considered modifications of Hilbert's axioms from a different perspective. H\"{a}hl and Peters modify Hilbert's congruence axioms in order to develop a system closer to Euclid's approach, including an axiom related to the method of superposition, and prove its equivalence with Hilbert's original system. \cite{HaehlPeters2022} More recent work has continued to investigate alternative axiomatizations of absolute geometry and the role of the classical congruence principles. \cite{EdwardsPambuccian2026}

The purpose of the present paper is different. Rather than proposing a new foundational axiom system, we take the system $\MG^{-}$ as a fixed starting point and analyze, criterion by criterion, the additional principles that permit the reconstruction of SAS. The emphasis is therefore on the \emph{deductive dependencies} among the congruence criteria and the auxiliary geometric principles required in their reconstruction.

For ASA we prove that the principle concerning correspondence among rays inside congruent angles, denoted by [\textbf{RCT}], is sufficient to recover SAS. This principle in $\MG$ corresponds to Theorem 13 of \cite{Hilbert1950}. Thus, within $\MG^{-}$,

\begin{equation*}
\textrm{ASA, [\textbf{RCT}]}\;\vdash\;\textrm{SAS}.
\end{equation*}

For SSS and SAA we obtain corresponding reconstruction results, but with different additional principles. More precisely, we prove that
\begin{equation*}
\textrm{SSS},\;[\textbf{RCT}],\;Y\;\vdash\;\textrm{SAS},
\qquad
\textrm{SAA},\;[\textbf{RCT}],\;Z\;\vdash\;\textrm{SAS}.
\end{equation*}
Here, $Y$ is the set of principles consisting of the existence of a midpoint and the congruence of right triangles with congruent hypotenuses and acute angles, while $Z$ consists of the existence of angle bisectors, the congruence of supplementary angles, and the Pons Asinorum. We also investigate the independence of these additional principles by means of an explicit model. We find that the right-triangle congruence principle cannot be derived from the other auxiliary principles in the SSS case, and that the Pons Asinorum cannot be derived from the other auxiliary principles in the SAA case.

The resulting picture is not intended as a formal hierarchy of the congruence criteria. Rather, it provides a comparative analysis of their deductive strength relative to the same underlying system $\MG^{-}$. In particular, the fact that ASA requires only [\textbf{RCT}] for the reconstruction of SAS, whereas our reconstructions for SSS and SAA require additional auxiliary principles, suggests a meaningful qualitative distinction between the three criteria.

The analysis also places the classical results on triangle congruence in a common axiomatic framework. Standard treatments normally take SAS as a starting axiom and derive other congruence criteria from it. Here the direction of investigation is deliberately reversed: SAS is removed, and the other criteria are used as possible replacements. This reversal makes visible dependencies that are less apparent in the usual synthetic development and provides a systematic way of comparing alternative foundations for triangle congruence.

\section{From ASA to SAS}\label{PEN_s2}

We will follow the exposition of \cite{Greenberg1993}. We briefly recall the axioms and the standard results that
will be used in the sequel. The incidence, order (or betweenness) and congruence axioms are grouped into the three
families [\textbf{I}], [\textbf{O}] and [\textbf{C}], respectively. We
denote the three incidence axioms by [\textbf{I}1], [\textbf{I}2] and
[\textbf{I}3], and the four order axioms by [\textbf{O}1],
[\textbf{O}2], [\textbf{O}3] and [\textbf{O}4], following the order in
which they are presented in \cite{Greenberg1993}. We likewise denote the six
congruence axioms by [\textbf{C}1]--[\textbf{C}6], again following
the numbering of \cite{Greenberg1993}. The first three congruence axioms concern
the congruence of segments, [\textbf{C}4] and [\textbf{C}5] concern the
congruence of angles, while [\textbf{C}6] is the SAS criterion, which
connects the two congruence relations. We refer to \cite{Greenberg1993} for the complete statements of these axioms; for convenience, they are summarized in the final section. This constitutes the axiomatic system $\MG$, also called a Hilbert plane by \cite{Hartshorne2000}.

As anticipated, we introduce the axiomatic system $\MG^{-}$, consisting of the axiomatic system $\MG$ with SAS/\ref{C6} removed. From this point onward, unless otherwise indicated, the notation
\begin{equation*}
\Phi\vdash\Psi
\end{equation*}
will indicate that $\Psi$ is deducible from $\Phi$ in the axiomatic system $\MG^{-}$. In principle, any result previously proved in $\MG$ can be reinterpreted in the new setting by assuming SAS among the hypotheses. When possible, for brevity, we will state the theorem and refer to the proof given in the reference.

Let us begin with the Ray Correspondence Theorem, which corresponds to the following principle
\begin{enumerate}[label=\textrm{[}\textbf{RCT}\textrm{]}]
  \item\label{RCT} \emph{Given $\angle ABC \equiv \angle DEF$, then for every ray $\overrightarrow{BG}$ between $\overrightarrow{BA}$ and $\overrightarrow{BC}$, there exists a unique ray $\overrightarrow{EH}$ between $\overrightarrow{ED}$ and $\overrightarrow{EF}$ such that $\angle GBC \equiv \angle HEF$ and $\angle ABG \equiv \angle DEH$.}
\end{enumerate}
\begin{thm}\label{PEN_RCT}
  Assuming SAS, [\textbf{RCT}] holds.
\end{thm}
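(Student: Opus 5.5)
In the full Hilbert plane $\MG$ this is Greenberg's standard argument, and the plan is to reproduce it using the crossbar theorem, angle construction \ref{C4}, and the consequences of SAS: congruence of supplements, angle addition and subtraction, and uniqueness of angle transport. Start from $\angle ABC \equiv \angle DEF$ and a ray $\overrightarrow{BG}$ interior to $\angle ABC$. First normalize the picture. By the crossbar theorem, $\overrightarrow{BG}$ meets segment $AC$ at a point, which we may rename $G$, so $A*G*C$. Using \ref{C1}, choose $D,F$ on their rays with $BA\equiv ED$ and $BC\equiv EF$; replacing $D,F$ in this way does not change the angles involved.

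\emph{Existence.} Apply SAS to $\triangle ABC$ and $\triangle DEF$. This gives $AC\equiv DF$, $\angle BAC\equiv\angle EDF$ and $\angle BCA\equiv\angle EFD$. On the ray $\overrightarrow{DF}$, lay off $H$ with $DH\equiv AG$ by \ref{C1}. Since $AG<AC\equiv DF$, we get $D*H*F$, so $\overrightarrow{EH}$ lies between $\overrightarrow{ED}$ and $\overrightarrow{EF}$; the step $AG<AC\Rightarrow D*H*F$ uses the segment ordering that follows from \ref{C1}--\ref{C3}. Segment subtraction gives $GC\equiv HF$. Now apply SAS to $\triangle ABG$ and $\triangle DEH$, using $BA\equiv ED$, $\angle BAG=\angle BAC\equiv\angle EDF=\angle EDH$ and $AG\equiv DH$. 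This yields $\angle ABG\equiv\angle DEH$. In the same way, SAS applied to $\triangle CBG$ and $\triangle FEH$ yields $\angle GBC\equiv\angle HEF$. Alternatively, this second congruence follows from angle subtraction.

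\emph{Uniqueness.} Suppose $\overrightarrow{EH'}$ is another interior ray with $\angle ABG\equiv\angle DEH'$. Then $\angle DEH\equiv\angle DEH'$ by transitivity (\ref{C5}). Both rays lie on the same side of line $ED$, namely the side containing $F$, because interior rays lie on the $F$-side. The uniqueness clause of \ref{C4} then forces $\overrightarrow{EH}=\overrightarrow{EH'}$.

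The main obstacle is bookkeeping rather than anything conceptual. One must justify that $H$ lands strictly between $D$ and $F$, so that the constructed ray is really interior. One must also check that the angles at $A$ and $D$ are genuinely the same angles $\angle BAG$ and $\angle BAC$, which holds because $G$ lies on ray $\overrightarrow{AC}$. Since the theorem is stated inside $\MG$, all of these facts are standard and can simply be cited from \cite{Greenberg1993}. This is consistent with the paper's convention of referring to the source proof.
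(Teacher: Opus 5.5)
Your proposal is correct and is essentially the proof of Hilbert's Theorem 13, which the paper cites instead of proving: lay off equal sides with \ref{C1}, apply SAS to the full triangles, locate the ray's crossing point on $AC$ via the crossbar theorem, transfer $AG$ onto $DF$, and apply SAS to the two sub-triangles. Your explicit check that $D*H*F$ and your derivation of the uniqueness clause of [\textbf{RCT}] from the uniqueness part of \ref{C4} correctly fill in the details the citation leaves implicit.
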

\begin{proof}
See Theorem 13 of \cite{Hilbert1950}.
\end{proof}
We first state the following preliminary result, whose proof does not require the congruence relations.
\begin{prop}\label{PEN_PO_08}
If $D$ is in the interior of $\angle CAB$, then:
\begin{enumerate}[label=(\roman*)]
  \item\label{PEN_PO_08_i} except for $A$, every point on the ray $\overrightarrow{AD}$ is in the interior of $\angle CAB$;
  \item no point on the opposite ray to $\overrightarrow{AD}$ is in the interior of $\angle CAB$.
\end{enumerate}
\end{prop}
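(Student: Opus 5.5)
The proof uses only incidence and order, since interior of an angle is defined by half-planes. Recall that $D$ lies in the interior of $\angle CAB$ exactly when $D$ and $C$ lie on the same side of line $AB$, and $D$ and $B$ lie on the same side of line $AC$. The tool is the standard consequence of the plane separation property derived from [\textbf{O}4] (Pasch): if a point $P\neq A$ lies on line $AD$, then $P$ and $D$ are on the same side of any line $\ell$ through $A$ other than $AD$ exactly when $P$ lies on the ray $\overrightarrow{AD}$. The underlying lemma is that if $A\in\ell$ and $A\notin$ segment $PD$ with $P,D,A$ collinear, then segment $PD$ does not meet $\ell$. This holds because $\ell$ meets line $AD$ only at $A$, by [\textbf{I}1] uniqueness, since $\ell\neq AD$.

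First I check that line $AD$ differs from lines $AB$ and $AC$. Since $D$ is interior, $D$ is not on line $AB$ or on line $AC$, because points of the same side are by definition off the line. So lines $AD$ and $AB$ meet only at $A$, and similarly for $AC$.

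For (i), let $P\neq A$ be on $\overrightarrow{AD}$. Then either $P=D$, or $A{*}P{*}D$ fails, so $A$ is not between $P$ and $D$. The segment $PD$ lies on line $AD$, which meets line $AB$ only at $A$, and $A\notin PD$. Hence $P$ and $D$ are on the same side of $AB$, and by transitivity of ``same side'' $P$ is on the same side of $AB$ as $C$. The identical argument with line $AC$ shows $P$ is on the same side of $AC$ as $B$. So $P$ is interior.

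For (ii), let $P$ be on the opposite ray, $P\neq A$, so $P{*}A{*}D$. Then segment $PD$ meets line $AB$ at $A$. Since $P,D\notin AB$, they are on opposite sides of $AB$. As $D$ is on the same side as $C$, plane separation puts $P$ on the side opposite to $C$, so $P$ is not interior. The point $A$ itself lies on both lines and is not interior.

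The only delicate step is justifying ``not between $\Rightarrow$ same side'' and the transitivity used in (ii). Both come from the plane separation theorem, which is available in $\MG^{-}$ since it uses only [\textbf{I}] and [\textbf{O}]. I expect no genuine obstacle; the main care is in ruling out degenerate collinearity, which the first step does.
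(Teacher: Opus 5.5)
Your proof is correct and is essentially the standard plane-separation argument of Greenberg's Proposition 3.8(a),(b), which the paper cites rather than reproducing: points of $\overrightarrow{AD}$ other than $A$ stay on $D$'s side of both $\overleftrightarrow{AB}$ and $\overleftrightarrow{AC}$, because the segment joining them to $D$ avoids $A$; points of the opposite ray are separated from $D$, and hence from $C$, by $\overleftrightarrow{AB}$. One small slip: in (i) the condition you need is that $P{*}A{*}D$ fails, not $A{*}P{*}D$; this follows from [\textbf{O}3], since $P=D$, $A{*}P{*}D$, or $A{*}D{*}P$.
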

\begin{proof}
See Proposition 3.8(a) and (b) of \cite{Greenberg1993}.
\end{proof}
The ordering relation between angles with the same vertex is already available, as it is essentially given by the inclusion of their interiors. The correspondence between rays induced by [\textbf{RCT}] allows this ordering to be transferred to angles with different vertices, provided that the containing angles are congruent. Thus, [\textbf{RCT}] allows us to compare angles in different locations while preserving the ordering relation under angle congruence \cite[Definition at p. 91]{Greenberg1993}.
\begin{prop}\label{PEN_CP12}\textsc{Ordering of Angles}\textrm{.}\\
If [\textbf{RCT}] is assumed, then, given the angles $\angle P$, $\angle Q$ and $\angle R$:
\begin{enumerate}[label=(\roman*)]
  \item \emph{Trichotomy}: exactly one of the following conditions holds
  \begin{equation*}
    \angle P<\angle Q,\quad \angle P\equiv\angle Q,\quad \angle Q<\angle P.
  \end{equation*}
  \item If $\angle P<\angle Q$ and $\angle Q\equiv\angle R$, then $\angle P <\angle R$.
  \item If $\angle Q<\angle P$ and $\angle Q\equiv\angle R$, then $\angle R<\angle P$.
  \item \emph{Transitivity}: If $\angle P<\angle Q$ and $\angle Q < \angle R$, then $\angle P<\angle R$.
\end{enumerate}
\end{prop}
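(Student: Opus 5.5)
We work with Greenberg's definition of the ordering: $\angle ABC<\angle DEF$ means that there is a ray $\overrightarrow{EG}$ between $\overrightarrow{ED}$ and $\overrightarrow{EF}$ with $\angle ABC\equiv\angle GEF$. We also use the axioms \ref{C4} (existence and uniqueness of angle transport to a given side of a given ray) and \ref{C5} (angle congruence is an equivalence relation). None of these involves SAS. The role of [\textbf{RCT}] is to replace the missing Theorem 13 wherever a congruence between containing angles has to be pushed down to the rays inside them. The statements should be proved in the order (ii), (iii), (iv), (i), since trichotomy uses the earlier items.

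For (ii), suppose $\angle P<\angle Q$ is witnessed by a ray $\overrightarrow{EG}$ inside $\angle Q=\angle DEF$ with $\angle P\equiv\angle GEF$. Let $\angle R=\angle D'E'F'$ with $\angle Q\equiv\angle R$. Applying [\textbf{RCT}] gives a ray $\overrightarrow{E'G'}$ between $\overrightarrow{E'D'}$ and $\overrightarrow{E'F'}$ with $\angle GEF\equiv\angle G'E'F'$. By \ref{C5}, $\angle P\equiv\angle G'E'F'$, so $\angle P<\angle R$.

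For (iii), $\angle Q<\angle P$ is witnessed by a ray inside $\angle P$ cutting off an angle congruent to $\angle Q$. Since $\angle Q\equiv\angle R$, the same ray cuts off an angle congruent to $\angle R$ by \ref{C5}, so $\angle R<\angle P$. No use of [\textbf{RCT}] is needed here.

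For (iv), write $\angle R=\angle D''E''F''$. From $\angle Q<\angle R$ we get a ray $\overrightarrow{E''K}$ inside $\angle R$ with $\angle Q\equiv\angle KE''F''$. By (ii), $\angle P<\angle KE''F''$, so some ray $\overrightarrow{E''L}$ inside $\angle KE''F''$ has $\angle P\equiv\angle LE''F''$. It remains to show that $\overrightarrow{E''L}$ lies inside $\angle D''E''F''$. This is pure order geometry:
\begin{enumerate}[label=(\alph*)]
  \item $L$ is on the $F''$-side of $E''K$ and on the $K$-side of $E''F''$;
  \item by Proposition \ref{PEN_PO_08}, $K$ is in the interior of $\angle D''E''F''$, so the $K$-side of $E''F''$ equals the $D''$-side;
  \item by the crossbar theorem, $\overrightarrow{E''K}$ meets the segment $D''F''$, and a plane-separation argument then places $L$ on the $F''$-side of $E''D''$.
\end{enumerate}
This is a standard lemma from Greenberg's Chapter 3.

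For (i), transport $\angle P$ onto the ray $\overrightarrow{EF}$ of $\angle Q=\angle DEF$, on the same side as $D$, using \ref{C4}. This gives a unique ray $\overrightarrow{EG}$ with $\angle P\equiv\angle GEF$. There are three possibilities:
\begin{itemize}
  \item if $\overrightarrow{EG}=\overrightarrow{ED}$, then $\angle P\equiv\angle Q$;
  \item if $\overrightarrow{EG}$ is between $\overrightarrow{ED}$ and $\overrightarrow{EF}$, then $\angle P<\angle Q$;
  \item otherwise $\overrightarrow{ED}$ is between $\overrightarrow{EG}$ and $\overrightarrow{EF}$ (an order-geometry fact), so $\angle DEF<\angle GEF\equiv\angle P$, and (iii) gives $\angle Q<\angle P$.
\end{itemize}
Hence at least one of the three relations holds.

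The main obstacle is exclusivity in (i), because incompatibilities proved at one vertex must be transferred to another vertex.
\begin{itemize}
  \item Suppose $\angle P\equiv\angle Q$ and $\angle P<\angle Q$. Then the ray $\overrightarrow{EG}$ obtained above would both equal $\overrightarrow{ED}$ (by uniqueness in \ref{C4}) and lie strictly inside $\angle DEF$, which is impossible. For this, a witness of $\angle P<\angle Q$ located anywhere must be moved to vertex $E$ on the $D$-side. The definition already places the witness inside $\angle Q$ itself, and uniqueness in \ref{C4} then identifies it with $\overrightarrow{EG}$.
  \item Suppose $\angle P<\angle Q$ and $\angle Q<\angle P$. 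Then (iv) gives $\angle P<\angle P$. This is a witness ray strictly inside $\angle P$ cutting off an angle congruent to $\angle P$, and uniqueness in \ref{C4} forces that ray to coincide with the side of $\angle P$, a contradiction.
\end{itemize}
The delicate point throughout is to check that every ray produced by [\textbf{RCT}] or by \ref{C4} lands on the side needed for the betweenness claims. I would handle this by always transporting on the side of the remaining arm.
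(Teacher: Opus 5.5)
Your proof is correct, and its core mechanism is the paper's: you transport one angle to the other's vertex with \ref{C4} and use [\textbf{RCT}] to carry an inner ray across a congruence. Your (ii) and (iv) are essentially the paper's arguments.

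The difference lies in how (i) is organized. The paper proves trichotomy first and directly. It transports $\angle Q$ into $\angle P$ and invokes [\textbf{RCT}] inline when $\overrightarrow{PA}$ falls inside the transported angle. For exclusivity it only remarks that uniqueness in [\textbf{RCT}] allows just one of the two inequalities, and it leaves the incompatibility of $\equiv$ with $<$ implicit.

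You instead prove (ii)--(iv) first and derive (i) from them. You exclude $\equiv$ together with $<$ by uniqueness in \ref{C4}. You reduce the two inequalities together, via (iv), to $\angle P<\angle P$, which \ref{C4} again excludes. This gives a complete exclusivity argument. It also makes clear that [\textbf{RCT}] is needed only for substituting the larger angle, i.e.\ (ii) and what rests on it, while (iii) follows from \ref{C5} alone. The paper's ``analogously to (ii)'' does not make that visible.

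One slip: in the third case of the existence part of (i), you pass from $\angle DEF<\angle GEF$ and $\angle GEF\equiv\angle P$ to $\angle Q<\angle P$. That replaces the larger angle, so the step is (ii), not (iii). Since (ii) is already proved nothing breaks, but this is exactly where [\textbf{RCT}] enters trichotomy.
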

\begin{proof}We prove the four statements with reference to Figure \ref{PEN_CP12_fn}:
\begin{figure}[ht!]
  \centering
  \begin{tikzpicture}[>=stealth,scale=1.1]


\coordinate (P) at (0,0);

\draw[->] (P) -- ++(3,0);
\draw[->] (P) -- ++(35:3);

\coordinate (B) at ($(P)+(2.2,0)$);
\coordinate (A) at ($(P)+(35:2.2)$);

\filldraw (P) circle (0.8pt);
\filldraw (A) circle (0.8pt);
\filldraw (B) circle (0.8pt);

\node[below] at (P) {$P$};
\node[above] at (A) {$A$};
\node[below] at (B) {$B$};

\coordinate (E) at ($(P)+(18:1.5)$);

\draw[->] (P)--++(18:2.6);

\filldraw (E) circle (0.8pt);
\node[right] at (E) {$E$};


\coordinate (Q) at (4.2,0);

\draw[->] (Q) -- ++(3,0);
\draw[->] (Q) -- ++(45:3);

\coordinate (D) at ($(Q)+(2.2,0)$);
\coordinate (C) at ($(Q)+(45:2.2)$);

\filldraw (Q) circle (0.8pt);
\filldraw (C) circle (0.8pt);
\filldraw (D) circle (0.8pt);

\node[below] at (Q) {$Q$};
\node[above] at (C) {$C$};
\node[below] at (D) {$D$};

\coordinate (F) at ($(Q)+(24:1.45)$);

\draw[->] (Q)--++(24:2.6);

\filldraw (F) circle (0.8pt);
\node[right] at (F) {$F$};


\coordinate (R) at (8.4,0);

\draw[->] (R) -- ++(3,0);
\draw[->] (R) -- ++(55:3);

\coordinate (H) at ($(R)+(2.2,0)$);
\coordinate (G) at ($(R)+(55:2.2)$);

\filldraw (R) circle (0.8pt);
\filldraw (G) circle (0.8pt);
\filldraw (H) circle (0.8pt);

\node[below] at (R) {$R$};
\node[above] at (G) {$G$};
\node[below] at (H) {$H$};

\coordinate (I) at ($(R)+(30:1.45)$);

\draw[->] (R)--++(30:2.6);

\filldraw (I) circle (0.8pt);
\node[right] at (I) {$I$};
\coordinate (L) at ($(R)+(15:2.0)$);

\draw[->] (R)--++(15:2.6);

\filldraw (L) circle (0.8pt);
\node[right] at (L) {$L$};
\end{tikzpicture}
  \caption{Ordering of Angles}\label{PEN_CP12_fn}
\end{figure}
\begin{enumerate}[label=(\roman*)]
  \item If $\angle P\equiv\angle Q$, the claim follows. Suppose that $\angle P \not\equiv \angle Q$. Let $\angle P=\angle APB$ and $\angle Q=\angle CQD$. Then, by \ref{C4}, there exists a unique ray $\overrightarrow{PE}$ which, together with $\overrightarrow{PA}$, lies on the same side of $\overleftrightarrow{PB}$ and such that $\angle EPB\equiv\angle CQD$. The point $E$ may be in the interior of $\angle APB$, or $A$ may be in the interior of $\angle EPB$. Equivalently, using Proposition \ref{PEN_PO_08}(i), $\overrightarrow{PE}$ is between $\overrightarrow{PA}$ and $\overrightarrow{PB}$, or $\overrightarrow{PA}$ is between $\overrightarrow{PE}$ and $\overrightarrow{PB}$. If $\overrightarrow{PE}$ is between $\overrightarrow{PA}$ and $\overrightarrow{PB}$, then by the definition of $<$, $\angle CQD<\angle APB$, which is the claim. If instead $\overrightarrow{PA}$ is between $\overrightarrow{PE}$ and $\overrightarrow{PB}$, we can apply [\textbf{RCT}]. Since $\angle EPB\equiv\angle CQD$, there exists $\overrightarrow{QF}$ between $\overrightarrow{QC}$ and $\overrightarrow{QD}$ such that $\angle FQD\equiv\angle APB$, that is, $\angle APB<\angle CQD$, which is the claim and completes the proof. The fact that [\textbf{RCT}] uniquely determines the corresponding ray ensures that only one of the two inequalities can hold.
  \item Using the same notation as in (i) for the angles $\angle P$ and $\angle Q$, let $\angle R=\angle GRH$. By hypothesis, there exists a ray $\overrightarrow{QF}$ between $\overrightarrow{QC}$ and $\overrightarrow{QD}$ such that $\angle FQD\equiv\angle APB$. Using the hypothesis $\angle Q\equiv\angle R$ and [\textbf{RCT}], we obtain a ray $\overrightarrow{RI}$ between $\overrightarrow{RG}$ and $\overrightarrow{RH}$ such that $\angle IRH\equiv\angle FQD$. By \ref{C2}, $\angle IRH\equiv\angle APB$, and therefore, by the definition of $<$, the claim follows.
  \item Analogously to (ii), exchanging $\angle P$ and $\angle Q$.
  \item Using the same notation as in (ii), by the same reasoning we can obtain the ray $\overrightarrow{QF}$ between $\overrightarrow{QC}$ and $\overrightarrow{QD}$ such that $\angle FQD\equiv\angle APB$. Similarly, from $\angle Q < \angle R$ we obtain a ray $\overrightarrow{RI}$ between $\overrightarrow{RG}$ and $\overrightarrow{RH}$ such that $\angle IRH\equiv\angle CQD$. Applying [\textbf{RCT}], there exists $\overrightarrow{RL}$ between $\overrightarrow{RI}$ and $\overrightarrow{RH}$ such that $\angle LRH\equiv\angle FQD$. By convexity of the angle interior \cite[Theorem 4.20]{Lee2013}, we have that $\overrightarrow{RL}$ is between $\overrightarrow{RG}$ and $\overrightarrow{RH}$; by \ref{C5}, $\angle LRH\equiv\angle APB$. It follows that $\angle APB<\angle GRH$.
\end{enumerate}
\end{proof}
\begin{figure}
  \centering
  \begin{tikzpicture}[scale=0.85]
  \coordinate (B) at (0,0);
  \coordinate (C) at (5,0.5);
  \coordinate (A) at (1.75,2.75);

  \coordinate (E) at (8,0);
  \coordinate (F) at (13,0.5);
  \coordinate (D) at (9.75,2.75);

  \draw[very thick] (A) -- (B) -- (C) -- cycle;
  \draw[very thick] (D) -- (E) -- (F) -- cycle;

  \fill (A) circle (1.5pt) node[above] {$A$};
  \fill (B) circle (1.5pt) node[left] {$B$};
  \fill (C) circle (1.5pt) node[right] {$C$};

  \fill (D) circle (1.5pt) node[above] {$D$};
  \fill (E) circle (1.5pt) node[left] {$E$};
  \fill (F) circle (1.5pt) node[right] {$F$};

  \PerpMark{A}{B}{0.5}{0.075}{thin}
  \PerpMark{D}{E}{0.5}{0.075}{thin}
  \PerpMark{A}{C}{0.5}{0.075}{thin}
  \PerpMark{A}{C}{0.525}{0.075}{thin}
  \PerpMark{D}{F}{0.51}{0.075}{thin}
  \PerpMark{D}{F}{0.525}{0.075}{thin}

  \pic[draw, angle radius=0.55cm] {angle=B--A--C};
  \pic[draw, angle radius=0.55cm] {angle=E--D--F};

\coordinate (H) at ($(D)!0.80!(F)$);
\fill (H) circle (1.5pt) node[above] {$\quad H$};
\draw[densely dotted] (E) -- (H);
\pic[draw, angle radius=0.85cm] {angle=H--E--D};
\pic[draw, angle radius=0.95cm] {angle=H--E--D};

\pic[draw, angle radius=0.85cm] {angle=C--B--A};
\pic[draw, angle radius=0.95cm] {angle=C--B--A};
\end{tikzpicture}
  \caption{\cite[Theorem 16.5]{Martin1982} Schematic}\label{PEN_ASAf}
\end{figure}
In \cite[Theorem 16.5]{Martin1982}, within a metric framework, it is shown that ASA implies SAS by making
use of the trichotomy of the order on angles. Indeed (see Figure \ref{PEN_ASAf}), let $\triangle ABC$ and $\triangle DEF$ be such that:
\begin{equation*}
AB\equiv DE,\quad \angle BAC\equiv \angle EDF,\quad AC\equiv DF.
\end{equation*}
The argument consists in separately excluding the cases
\begin{equation*}
\angle ABC<\angle DEF
\qquad\text{and}\qquad
\angle DEF<\angle ABC,
\end{equation*}
and then concluding, by trichotomy, that necessarily
\begin{equation*}
\angle ABC\equiv\angle DEF.
\end{equation*}
Also within the synthetic development of the theory followed in this text,
it is possible to exclude the two order relations separately. For example, suppose that
\begin{equation*}
\angle ABC<\angle DEF.
\end{equation*}
By definition of $<$, there exists a ray $\overrightarrow{EG}$ internal to $\angle DEF$ such that
\begin{equation*}
\angle ABC\equiv\angle DEG.
\end{equation*}
Applying \cite[Crossbar Theorem, p.~82]{Greenberg1993}, this ray meets the segment $DF$ at a point $H$. The ASA criterion then gives
\begin{equation*}
\triangle DEH\equiv\triangle ABC,
\end{equation*}
from which it follows that $DH\equiv AC$. Moreover, since $D-H-F$, we obtain $DH<DF$, and hence $AC<DF$, in contradiction with the hypothesis $AC\equiv DF$ and with the trichotomy of the order relation between segments \cite[Proposition 3.13]{Greenberg1993}. Therefore,
\begin{equation*}
\angle ABC<\angle DEF
\end{equation*}
cannot hold.

By an entirely analogous argument, the relation
\begin{equation*}
\angle DEF<\angle ABC
\end{equation*}
can also be excluded.

The difference between the metric approach of \cite{Martin1982} and the synthetic approach adopted in this article, however, emerges in the next step. If we use ASA as a replacement axiom for SAS, we must first prove the trichotomy of the order relation between angles without making use of SAS. In the proof of Proposition \ref{PEN_CP12}(i), essential use is made of [\textbf{RCT}], according to which the rays internal to two congruent angles are in a one-to-one correspondence preserving the congruence of the angles that they delimit. The proof of [\textbf{RCT}], in turn, makes use of the SAS/\ref{C6} criterion. Therefore, the argument of \cite{Martin1982} cannot be transferred directly to the present development without introducing a circular dependency.

The preceding analysis confirms the central role of the SAS criterion in the structure of the theory. Axioms \ref{C1}--\ref{C3} describe exclusively the congruence of segments, whereas axioms \ref{C4} and \ref{C5} concern only the congruence of angles. The SAS criterion is the first result that establishes a relation between these two structures, allowing information to be transferred from one to the other through triangle congruence. In this sense, it represents the true link between the two structures underlying the entire theory of congruence. The following main result also holds:
\begin{thm}\label{PEN_ASA}
If the ASA criterion and [\textbf{RCT}] are assumed, then SAS.
\end{thm}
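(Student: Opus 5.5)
We follow the scheme sketched above after \cite[Theorem 16.5]{Martin1982}, now in the setting of $\MG^{-}$ with ASA and [\textbf{RCT}] available. Let $\triangle ABC$ and $\triangle DEF$ satisfy $AB\equiv DE$, $\angle BAC\equiv\angle EDF$, $AC\equiv DF$. The goal is $\angle ABC\equiv\angle DEF$. Once this is established, ASA applied to the side $AB\equiv DE$ with its two adjacent angles gives $\triangle ABC\equiv\triangle DEF$, which is the full SAS conclusion.

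The first step is to use Proposition \ref{PEN_CP12}(i), which holds under [\textbf{RCT}] alone, to split into three cases: $\angle ABC<\angle DEF$, $\angle ABC\equiv\angle DEF$, or $\angle DEF<\angle ABC$. It then suffices to exclude the two strict inequalities.

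Suppose $\angle ABC<\angle DEF$. By the definition of $<$, there is a ray $\overrightarrow{EG}$ between $\overrightarrow{ED}$ and $\overrightarrow{EF}$ with $\angle DEG\equiv\angle ABC$. By the Crossbar Theorem, which involves no congruence, this ray meets $DF$ at a point $H$ with $D-H-F$. Proposition \ref{PEN_PO_08} gives $\angle EDH=\angle EDF$, so $\angle EDH\equiv\angle BAC$. The triangles $\triangle DEH$ and $\triangle ABC$ then share $DE\equiv AB$, $\angle EDH\equiv\angle BAC$ and $\angle DEH\equiv\angle ABC$, so ASA yields $DH\equiv AC\equiv DF$ by \ref{C2}. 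On the other hand, $D-H-F$ gives $DH<DF$. This contradicts trichotomy for segments \cite[Proposition 3.13]{Greenberg1993}, which uses only \ref{C1}--\ref{C3} and order, so it is valid in $\MG^{-}$. The case $\angle DEF<\angle ABC$ is symmetric, with the roles of the triangles exchanged.

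The main obstacle is ensuring that every auxiliary fact used is genuinely available without SAS. The facts needed are: the definition of $<$ for angles at different vertices together with its independence of choices (this is where [\textbf{RCT}] and Proposition \ref{PEN_CP12} enter); the uniqueness in segment laying-off from \ref{C1}, which makes $DH\equiv DF$ with $D-H-F$ contradictory; and the fact that ASA, stated as an axiom, yields congruence of all corresponding parts. I would check explicitly that segment trichotomy and the Crossbar Theorem are SAS-free. The crucial point, already noted above, is that angle trichotomy is the single place where SAS would otherwise be needed, and [\textbf{RCT}] substitutes for it.
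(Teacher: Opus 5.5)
Your proposal is correct and follows the paper's argument. You rule out both strict inequalities using the Crossbar Theorem, ASA and segment trichotomy, obtain $\angle ABC\equiv\angle DEF$, and close with ASA on $AB\equiv DE$. The only difference is cosmetic: you cite Proposition~\ref{PEN_CP12}(i) for angle trichotomy, whereas the paper inlines the direction it needs, laying off $\angle DEF$ at $B$ and applying [\textbf{RCT}] directly when $\overrightarrow{BC}$ falls inside $\angle ABG$.
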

\begin{proof}
Let $\triangle ABC$ and $\triangle DEF$ be such that:
\begin{equation*}
AB\equiv DE,\quad \angle BAC\equiv \angle EDF,\quad AC\equiv DF.
\end{equation*}
Using \ref{C4}, we can construct a ray $\overrightarrow{BG}$ on the same side of $C$ with respect to $\overleftrightarrow{AB}$ such that:
\begin{equation*}
\angle ABG\equiv\angle DEF.
\end{equation*}
We have established in the reasoning preceding the theorem that $\angle DEF\nless\angle ABC$, and therefore either $\overrightarrow{BC}=\overrightarrow{BG}$, or $\overrightarrow{BC}$ is internal to the angle $\angle ABG$. In the latter case, applying [\textbf{RCT}], we know that there exists a ray $\overrightarrow{EH}$ such that $\angle DEH\equiv\angle ABC$, and hence $\angle ABC<\angle DEF$, which contradicts $\angle ABC\nless\angle DEF$. Therefore $\overrightarrow{BC}=\overrightarrow{BG}$, that is,
\begin{equation*}
\angle ABC\equiv\angle DEF,
\end{equation*}
and applying ASA we obtain the congruence of the two triangles.
\end{proof}
\section{From SSS or SAA to SAS}\label{PEN_s3}
We can now ask whether, by adding one of the criteria SAA and SSS to the axiomatic system $\MG^{-}$, it is possible to reconstruct SAS, possibly also assuming some auxiliary principles. As in the case of ASA, we require these principles to be provable in the axiomatic system $\MG$. We shall carry out the analysis for the SAA and SSS criteria in parallel and introduce the following abbreviated notation for the principles that will be used below.
\begin{enumerate}[label=\textrm{[}\textbf{SA}\textrm{]}]
  \item\label{SA} \emph{Supplements of congruent angles are congruent.}
\end{enumerate}
\begin{enumerate}[label=\textrm{[}\textbf{MS}\textrm{]}]
  \item\label{MS} \emph{Every segment has a unique midpoint.}
\end{enumerate}
\begin{enumerate}[label=\textrm{[}\textbf{AB}\textrm{]}]
  \item\label{AB} \emph{Every angle has a bisector.}
\end{enumerate}
\begin{enumerate}[label=\textrm{[}\textbf{HA}\textrm{]}]
  \item\label{HA} \emph{Two right triangles having congruent hypotenuses and congruent acute angles, with the right angle opposite the acute angle, are congruent.}
\end{enumerate}
\begin{rem}\label{PEN_REM_HA}
\ref{HA} can be proved in $\MG$ as a particular case of the SAA criterion using the congruence of all right angles. When, instead, \ref{HA} is assumed as an additional principle to $\MG^{-}$, this result is not available, and it is therefore necessary to specify to which of the two supplements the right angle is congruent. We consider the supplement whose common side is the side containing the leg opposite the acute angle. In Figure \ref{PEN_REM_HA_f}, the relevant supplement is shown by a dotted line.
\begin{figure}[ht!]
\centering
\begin{tikzpicture}[scale=1,thin]
\coordinate (A) at (0,0);
\coordinate (Q) at (4,0);
\coordinate (Qprime) at (5,0);

\fill (A) circle (1.5pt) node[below] {$A$};
\fill (Q) circle (1.5pt) node[below] {$Q$};

\coordinate (P) at (4.0,2);

\draw[semithick] (P) -- (Q);
\draw[semithick] (A) -- (P);
\draw[semithick] (A) -- (Q);
\draw[thin] (Q) -- (Qprime);

\fill (P) circle (1.5pt) node[above] {$P$};
\RightAngle{Q}{A}{P}{0.10}{0.15}
\pic[draw, angle radius=0.75cm] {angle=Q--A--P};
\tikzset{every path/.style={densely dotted}}
\RightAngle{Q}{Qprime}{P}{0.40}{0.15}
\end{tikzpicture}
\caption{\ref{HA} Schematic}\label{PEN_REM_HA_f}
\end{figure}
\end{rem}
Assuming [\textbf{SA}] and [\textbf{RCT}] as additional principles to $\MG^{-}$, we can reformulate the classical result corresponding to Theorem 15 of \cite{Hilbert1950}.
\begin{thm}\label{PEN_EU4P}
If [\textbf{SA}] and [\textbf{RCT}] are assumed, then all right angles are congruent to one another.
\end{thm}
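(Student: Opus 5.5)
We follow Hilbert's proof of Theorem 15, but replace each use of SAS by [\textbf{SA}] or by the ordering of angles in Proposition \ref{PEN_CP12}. Recall that a right angle is an angle congruent to one of its supplements. Let $\angle BAD$ be right, with supplement $\angle CAD$ (so $C-A-B$ and $\angle BAD\equiv\angle CAD$). Let $\angle FEH$ be right, with supplement $\angle GEH$ (so $G-E-F$ and $\angle FEH\equiv\angle GEH$). We argue by contradiction and suppose $\angle BAD\not\equiv\angle FEH$. By the trichotomy in Proposition \ref{PEN_CP12}(i), one of the two strict inequalities holds; by symmetry we may assume $\angle FEH<\angle BAD$.

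The construction comes from the definition of $<$ together with \ref{C4}. There is a ray $\overrightarrow{AK}$ strictly between $\overrightarrow{AB}$ and $\overrightarrow{AD}$ with $\angle BAK\equiv\angle FEH$. Its supplement is $\angle CAK$. The right angle $\angle FEH$ has the congruent supplement $\angle GEH$. Applying [\textbf{SA}] to $\angle BAK\equiv\angle FEH$ gives $\angle CAK\equiv\angle GEH$. Using $\angle GEH\equiv\angle FEH\equiv\angle BAK$ and \ref{C5}, we get $\angle CAK\equiv\angle BAK$.

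Next we compare angles inside the picture at $A$. Since $\overrightarrow{AK}$ is interior to $\angle BAD$, the ray $\overrightarrow{AD}$ is interior to $\angle CAK$; this needs a small order lemma, provable without congruence (compare Proposition \ref{PEN_PO_08}), using that $\overrightarrow{AC}$ is opposite to $\overrightarrow{AB}$. Hence $\angle CAD<\angle CAK$ by the definition of $<$, and likewise $\angle BAK<\angle BAD$. Now chain these with the congruences. From $\angle BAD\equiv\angle CAD$ and $\angle CAD<\angle CAK$, Proposition \ref{PEN_CP12}(iii) (with the symmetry of $\equiv$) gives $\angle BAD<\angle CAK$. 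Then $\angle CAK\equiv\angle BAK$ and (ii) give $\angle BAD<\angle BAK$. Combined with $\angle BAK<\angle BAD$, transitivity (iv) yields $\angle BAD<\angle BAD$. This is impossible, because trichotomy (i) together with $\angle BAD\equiv\angle BAD$ (from \ref{C5}) excludes it.

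The main obstacle is making sure that every use of angle ordering goes only through Proposition \ref{PEN_CP12}, which depends on [\textbf{RCT}], and through the betweenness lemma. We must avoid any hidden appeal to SAS, as happens for example in Hilbert's use of vertical angles or in moving an angle to another vertex. The delicate point is the betweenness claim that $\overrightarrow{AD}$ lies inside $\angle CAK$. I would prove it directly from the plane separation axioms [\textbf{O}4]. First, $K$ and $D$ lie on the same side of $\overleftrightarrow{AB}$. Second, $C$ and $K$ lie on opposite sides of $\overleftrightarrow{AD}$: $B$ and $K$ are on the same side of $\overleftrightarrow{AD}$, while $C$ and $B$ are on opposite sides of it because $C-A-B$. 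These two facts place $D$ in the interior of $\angle CAK$.
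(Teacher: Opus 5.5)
Your proof is correct and follows the paper's (i.e.\ Hilbert's) argument: transport the other right angle to $\angle BAK$ inside $\angle BAD$, then use [\textbf{SA}] with \ref{C5} to make it congruent to its own supplement $\angle CAK$. The only difference is the final contradiction, which you get from the chain $\angle BAK<\angle BAD\equiv\angle CAD<\angle CAK\equiv\angle BAK$ via Proposition \ref{PEN_CP12}; the paper instead applies [\textbf{RCT}] once to $\angle BAD\equiv\angle CAD$ to get a ray $\overrightarrow{AK'}$ inside $\angle CAD$ with $\angle CAK'\equiv\angle BAK\equiv\angle CAK$, and concludes by the uniqueness clause of \ref{C4}, so it needs neither trichotomy nor transitivity.

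One small point in your betweenness step: the condition you still need for $D$ to lie inside $\angle CAK$ is that $D$ and $C$ are on the same side of $\overleftrightarrow{AK}$. Your second fact ($C$ and $K$ on opposite sides of $\overleftrightarrow{AD}$) is true but is not that condition. The needed fact follows at once: $\overrightarrow{AK}$ meets $BD$ by the Crossbar Theorem, so $B$ and $D$ are on opposite sides of $\overleftrightarrow{AK}$; $C-A-B$ puts $B$ and $C$ on opposite sides as well; and \ref{O4}(ii) then places $C$ and $D$ on the same side.
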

\begin{proof}
Let $\angle BAD$ be congruent to its supplementary angle $\angle CAD$, and likewise let $\angle B'A'D'$ be congruent to its supplementary angle $\angle C'A'D'$. Hence $\angle BAD$, $\angle CAD$, $\angle B'A'D'$, and $\angle C'A'D'$ are all right angles.

Assume, to the contrary, that
\begin{equation*}
\angle B'A'D'\not\equiv\angle BAD.
\end{equation*}
\begin{figure}[ht!]
\centering
\begin{tikzpicture}[scale=1,thick]
\coordinate [label=below:$A$] (A) at (3,0);
\coordinate [label=below right:$C$] (C) at (5,0);
\coordinate [label=below:$B$] (B) at (1,0);
\coordinate [label=above:$D$] (D) at (3,3);
\coordinate [label=above:$D''$] (J) at ( 2,3);
\coordinate [label=above:$D'''$] (K) at (4,3);
\draw [-] (A) -- (D);
\draw [-] (A) -- (B);
\draw [-] (A) -- (C);
\draw [-,thin,dashed] (A) -- (J);
\draw [-,thin,dashed] (A) -- (K);
\coordinate [label=below:$A'$] (E) at (3+5,0);
\coordinate [label=below:$\;$] (E11) at (3+5+0.25,0+0.25);
\coordinate [label=below:$\;$] (E12) at (3+5,0+0.25);
\coordinate [label=below:$\;$] (E21) at (3+5+0.25,0);
\coordinate [label=below right:$C'$] (G) at (5+5,0);
\coordinate [label=below:$B'$] (F) at (1+5,0);
\coordinate [label=above:$D'$] (H) at (3+5,3);
\draw [-,thin] (E11) -- (E12);
\draw [-,thin] (E11) -- (E21);
\draw [-] (E) -- (H);
\draw [-] (E) -- (F);
\draw [-] (E) -- (G);
\end{tikzpicture}
\caption{ Theorem \ref{PEN_EU4P} Schematic}
\label{PEN_f_CP14}
\end{figure}
Lay off the angle $\angle B'A'D'$ upon the ray $\overrightarrow{AB}$ in such a manner that the ray $\overrightarrow{AD''}$ arising from this construction falls either within the angle $BAD$ or within the angle $CAD$. Suppose, for example, that $\overrightarrow{AD''}$ falls within the angle $BAD$. Since
\begin{equation*}
\angle B'A'D'\equiv\angle BAD'',
\end{equation*}
[\textbf{SA}] gives
\begin{equation*}
\angle C'A'D'\equiv\angle CAD''.
\end{equation*}
Moreover,
\begin{equation*}
\angle B'A'D'\equiv\angle C'A'D',
\end{equation*}
and hence, by [\textbf{C5}],
\begin{equation*}
\angle BAD''\equiv\angle CAD''.
\end{equation*}

Furthermore, since
\begin{equation*}
\angle BAD\equiv\angle CAD,
\end{equation*}
[\textbf{RCT}] yields a ray $\overrightarrow{AD'''}$ lying within the angle $CAD$ such that
\begin{equation*}
\angle BAD''\equiv\angle CAD'''
\qquad\text{and}\qquad
\angle DAD''\equiv\angle DAD'''.
\end{equation*}
The angle $\angle BAD''$ is congruent to $\angle CAD''$, as shown above, and is also congruent to $\angle CAD'''$. Therefore, by [\textbf{C5}],
\begin{equation*}
\angle CAD''\equiv\angle CAD'''.
\end{equation*}
This is impossible, however, because $\overrightarrow{AD''}$ and $\overrightarrow{AD'''}$ lie on the same side of the line $\overleftrightarrow{AC}$, and [\textbf{C4}] guarantees the uniqueness of the ray that realizes a given angle on a prescribed side of a given line.

The case in which $\overrightarrow{AD''}$ falls within the angle $CAD$ is analogous. Hence the assumption
\begin{equation*}
\angle B'A'D'\not\equiv\angle BAD
\end{equation*}
is impossible. Therefore
\begin{equation*}
\angle B'A'D'\equiv\angle BAD.
\end{equation*}
Thus all right angles are congruent to one another.
\end{proof}
A second classical result is that SSS proves the Pons asinorum:
\begin{thm}\label{PEN_LLL_PA} If we assume SSS, then [\textbf{PA}] follows.
\end{thm}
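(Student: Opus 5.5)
The plan is to use the classical ``self-congruence'' argument of Pappus, which compares an isosceles triangle with its own mirror image. Let $\triangle ABC$ be isosceles with $AB\equiv AC$; [\textbf{PA}] asserts that the base angles are congruent, $\angle ABC\equiv\angle ACB$. I will consider the two triangles $\triangle ABC$ and $\triangle ACB$, that is, the same triangle with the vertices matched by $A\mapsto A$, $B\mapsto C$, $C\mapsto B$. The three pairs of corresponding sides are then:
\begin{equation*}
AB\equiv AC \ \text{(hypothesis)},\qquad AC\equiv AB \ \text{(symmetry of segment congruence)},\qquad BC\equiv CB .
\end{equation*}
The third pair is congruent because segments are unordered, so $BC=CB$ and \ref{C2} (reflexivity) applies.

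First, I need the symmetry of segment congruence in $\MG^{-}$. I will derive it from \ref{C2} exactly as in \cite{Greenberg1993}, where it follows from \ref{C1}--\ref{C3} alone and so does not use SAS. Next, with the three side congruences in hand, SSS gives
\begin{equation*}
\triangle ABC\equiv\triangle ACB .
\end{equation*}
By the definition of triangle congruence, corresponding angles are congruent. The angle at $B$ in the first triangle corresponds to the angle at $C$ in the second, so $\angle ABC\equiv\angle ACB$, which is [\textbf{PA}].

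The main point to check, and the only delicate step, is that this use of SSS is legitimate. Two things must hold. First, SSS as an assumed principle must apply to an arbitrary correspondence of vertices, including one that maps a triangle onto itself with two vertices exchanged. Second, no hidden appeal to SAS may enter through the definition of triangle congruence or through the symmetry and transitivity of congruence. The second concern is resolved by verifying that these properties use only \ref{C2} and \ref{C5}, which remain in $\MG^{-}$. If a reader prefers to avoid the self-correspondence, there is a fallback. Using \ref{C1}, construct a copy $\triangle A'B'C'$ with $A'B'\equiv AC$, $A'C'\equiv AB$ and $B'C'\equiv BC$. This requires a point at prescribed distances, which is not available without circle axioms, so I expect to rely on the direct self-correspondence argument instead.
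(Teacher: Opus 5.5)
Your proposal is correct and is essentially the paper's own proof: apply SSS to the self-correspondence $\triangle ABC\leftrightarrow\triangle ACB$, using $AB\equiv AC$, $AC\equiv AB$, $BC\equiv CB$, and read off $\angle ABC\equiv\angle ACB$. Your remark that symmetry of segment congruence comes from \ref{C2} alone, without SAS, makes explicit a step the paper leaves implicit. The fallback construction is not needed.
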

\begin{proof}
Let $\triangle ABC$ be such that
\begin{equation*}
AB\equiv AC.
\end{equation*}
Consider the same triangle with the vertices $B$ and $C$ interchanged,
namely $\triangle ACB$. The corresponding sides of the two triangles are
\begin{equation*}
AB\equiv AC,\qquad AC\equiv AB,\qquad BC\equiv CB.
\end{equation*}
Therefore, by the SSS criterion,
\begin{equation*}
\triangle ABC\equiv\triangle ACB.
\end{equation*}
The correspondence of the angles gives
\begin{equation*}
\angle ABC\equiv\angle ACB,
\end{equation*}
which is precisely the conclusion of the Pons asinorum.
\end{proof}
Let us now connect SAA to \ref{HA}.
\begin{thm}\label{PEN_LAA_HA}
If SAA and [\textbf{RCT}], \ref{SA} are assumed, then \ref{HA} follows.
\end{thm}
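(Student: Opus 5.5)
The plan is to reduce \ref{HA} directly to SAA, and to supply the missing congruence of the two right angles through Theorem \ref{PEN_EU4P}. Let $\triangle APQ$ and $\triangle A'P'Q'$ be right triangles with the right angles at $Q$ and $Q'$. Suppose the hypotenuses are congruent, $AP\equiv A'P'$, and the acute angles are congruent, $\angle QAP\equiv\angle Q'A'P'$. The right angles are the angles $\angle AQP$ and $\angle A'Q'P'$; following Remark \ref{PEN_REM_HA}, each is congruent to the supplement on the side of the leg opposite the acute angle.

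First I will verify that $\angle AQP$ and $\angle A'Q'P'$ are right angles in the sense used in Theorem \ref{PEN_EU4P}, namely that each is congruent to its own supplementary angle. This is exactly what the convention of Remark \ref{PEN_REM_HA} provides: the angle at $Q$ is congruent to its supplement $\angle PQQ''$, where $A-Q-Q''$. If the convention only fixes the supplement on one side, then \ref{SA}, combined with the symmetry of the congruence relation via \ref{C4}--\ref{C5}, lets me pass to whichever form Theorem \ref{PEN_EU4P} requires. Theorem \ref{PEN_EU4P}, which is available because \ref{SA} and [\textbf{RCT}] are assumed, then gives
\begin{equation*}
\angle AQP\equiv\angle A'Q'P'.
\end{equation*}

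Next I apply SAA to the correspondence $A\mapsto A'$, $Q\mapsto Q'$, $P\mapsto P'$. The data are the side $AP\equiv A'P'$, the angle $\angle PAQ\equiv\angle P'A'Q'$ adjacent to it, and the angle $\angle AQP\equiv\angle A'Q'P'$ opposite it. This is precisely the SAA configuration: two pairs of congruent angles together with a congruent side that is not included between them. SAA therefore yields $\triangle APQ\equiv\triangle A'P'Q'$, which is the conclusion of \ref{HA}.

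The main obstacle is the first step. I have to match the author's convention for ``right angle'' in $\MG^{-}$ with the hypothesis of Theorem \ref{PEN_EU4P}, which is stated for an angle congruent to its supplement, and I have to check that the SAA axiom as formulated allows the given side to be opposite one of the two given angles. If the paper's SAA has the side opposite the second angle instead, I will relabel the angles so that the configuration fits.
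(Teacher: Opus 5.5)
Your proposal is correct and follows the same route as the paper. You obtain the congruence of the two right angles from Theorem \ref{PEN_EU4P}, using [\textbf{SA}] and [\textbf{RCT}], and then apply SAA to the hypotenuse, the adjacent acute angle and the opposite right angle. Your worry about conventions is unnecessary, though harmless: the supplement fixed in Remark \ref{PEN_REM_HA} is exactly the one required in the hypothesis of Theorem \ref{PEN_EU4P}, so no extra step via [\textbf{SA}] is needed.
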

\begin{proof}
Let $\triangle ABC$ and $\triangle DEF$ be two right triangles such that
\begin{equation*}
AB\equiv DE,\qquad \angle BAC\equiv\angle EDF,
\end{equation*}
where $AB$ and $DE$ are the hypotenuses and the right angles are opposite the acute angles under consideration. By [\textbf{RCT}] and [\textbf{SA}], it follows from Theorem \ref{PEN_EU4P} that all right angles are congruent. In particular,
\begin{equation*}
\angle ACB\equiv\angle DFE.
\end{equation*}
Therefore, by SAA,
\begin{equation*}
\triangle ABC\equiv\triangle DEF,
\end{equation*}
and hence \ref{HA} holds.
\end{proof}
The following two results establish the existence of a right angle when one of the two criteria SSS and SAA is assumed, together with the principle \ref{MS} and, respectively, \ref{AB} and [\textbf{PA}].
\begin{thm}\label{PEN_LLL_RAE} If we assume SSS and \ref{MS}, then we can deduce the following results:
\begin{enumerate}[label=(\roman*)]
  \item Every angle has a bisector, i.e. \ref{AB}.
  \item A right angle exists.
\end{enumerate}
\end{thm}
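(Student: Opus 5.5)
For (i), I would use the classical construction of a bisector via an isosceles triangle and a midpoint, with SSS replacing SAS at the only step where a triangle congruence is needed. Let $\angle BAC$ be given. By \ref{C1} I lay off on the ray $\overrightarrow{AC}$ a point $C'$ with $AC'\equiv AB$, so $\angle BAC=\angle BAC'$ and $AB\equiv AC'$. By \ref{MS} the segment $BC'$ has a midpoint $M$, so $B-M-C'$ and $BM\equiv MC'$. The triangles $\triangle ABM$ and $\triangle AC'M$ then satisfy
\begin{equation*}
AB\equiv AC',\qquad BM\equiv C'M,\qquad AM\equiv AM,
\end{equation*}
and SSS gives $\triangle ABM\equiv\triangle AC'M$. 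In particular $\angle BAM\equiv\angle C'AM$.

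To conclude that $\overrightarrow{AM}$ is a bisector, it remains to check that $\overrightarrow{AM}$ lies between $\overrightarrow{AB}$ and $\overrightarrow{AC}$. Since $B-M-C'$, the point $M$ lies in the interior of $\angle BAC'$. This is the standard fact that points between $B$ and $C'$ are interior (Greenberg, Prop.~3.7), and it uses no congruence. So (i) follows without any circularity.

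For (ii), I would first build a pair of congruent adjacent angles and then show that they are supplementary. Start from any line $\overleftrightarrow{BC'}$ and a point $A$ off it. Lay off on the ray $\overrightarrow{BC'}$ the point $C'$ so that $AB\equiv AC'$. To arrange this, I pick $A$, $B$ and a ray from $A$ crossing the line at some point, and choose $C'$ on the line with $AC'\equiv AB$ and $C'\neq B$. With $M$ the midpoint of $BC'$, the congruence $\triangle ABM\equiv\triangle AC'M$ from (i) gives $\angle AMB\equiv\angle AMC'$. Because $B-M-C'$, the rays $\overrightarrow{MB}$ and $\overrightarrow{MC'}$ are opposite, so $\angle AMB$ and $\angle AMC'$ are a linear pair. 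Each is therefore congruent to its supplement, i.e.\ $\angle AMB$ is a right angle.

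The main obstacle is producing $C'\neq B$ on a line through $B$ with $AC'\equiv AB$ without SAS or continuity. I would avoid needing a circle–line intersection as follows. Fix any angle $\angle PAQ$ and bisect it by (i); the construction in (i) already produces $B$ on $\overrightarrow{AP}$ and $C'$ on $\overrightarrow{AQ}$ with $AB\equiv AC'$, using only \ref{C1}. The line $\overleftrightarrow{BC'}$ is then automatically available. Applying (ii)'s argument to this triangle yields the linear pair at $M$, and no further existence issue arises. I would only double-check that $A\notin\overleftrightarrow{BC'}$, which holds because $\angle PAQ$ is a genuine angle, so $B$, $A$, $C'$ are not collinear.
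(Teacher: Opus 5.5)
Your proof is correct and is essentially the paper's: use \ref{C1} to make the two arms of the angle congruent, take the midpoint $M$ of the base by \ref{MS}, and apply SSS to $\triangle ABM$ and $\triangle AC'M$ to get both $\angle BAM\equiv\angle C'AM$ and the congruent linear pair $\angle AMB\equiv\angle AMC'$. Your explicit check that $M$ lies in the interior of the angle adds a bit of rigor the paper leaves implicit. The detour in (ii) about finding $C'$ on a prescribed line is unnecessary: as you end up noting, the isosceles triangle from (i) already supplies the needed configuration, which is exactly how the paper proceeds.
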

\begin{proof}
\begin{figure}[ht!]
\centering
\begin{tikzpicture}[scale=1,thick]
\coordinate [label=below:$B$] (B) at (0,0);
\coordinate [label=above:$A$] (A) at (1.5,3);
\coordinate [label=below:$C$] (C) at (3,0);
\coordinate [label=below:$M$] (M) at (1.5,0);
\draw plot [mark=*,mark size=1pt] coordinates {(B)};
\draw plot [mark=*,mark size=1pt] coordinates {(A)};
\draw plot [mark=*,mark size=1pt] coordinates {(C)};
\draw plot [mark=*,mark size=1pt] coordinates {(M)};
\draw [-,very thin] (A) -- (M);
\draw [-,thin] (B) -- (A);
\draw [-,thin] (A) -- (C);
\draw [-,thin] (C) -- (B);
\RightAngle{M}{A}{C}{0.10}{0.15}
\QuoteAngle{A}{B}{C}{0.45}{}{very thin}
\QuoteAngle{A}{B}{M}{0.75}{}{very thin}
\QuoteAngle{A}{B}{M}{0.785}{}{very thin}
\QuoteAngle{A}{M}{C}{0.95}{}{very thin}
\QuoteAngle{A}{M}{C}{0.985}{}{very thin}
\end{tikzpicture}
\caption{Theorem \ref{PEN_LLL_RAE}-Proof schema}
\label{PEN_LLL_RAE_f}
\end{figure}
With reference to Figure \ref{PEN_LLL_RAE_f}, let the angle $\angle BAC$ be given. Without loss of generality, we can always reduce the argument to the case $AB\equiv AC$ using \ref{C1}. Consider the isosceles triangle $\triangle ABC$, and let $M$ be the midpoint of $BC$. Then the two triangles $\triangle ABM$ and $\triangle ACM$ have their corresponding sides congruent in pairs. Therefore, we can apply SSS and conclude that the two triangles are congruent. It follows that $\angle BAM\equiv \angle CAM$, so $\overrightarrow{AM}$ is a bisector of $\angle BAC$. Moreover, $\angle AMB\equiv \angle AMC$, and therefore the angle at $M$ is right.
\end{proof}
\begin{thm}\label{PEN_LAA_RAE} If we assume SAA, [\textbf{PA}], and \ref{AB}, then the base of an isosceles triangle is intersected perpendicularly by the bisector of the opposite angle, and the intersection point is the midpoint of the base segment.
\end{thm}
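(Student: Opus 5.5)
Let $\triangle ABC$ be isosceles with $AB\equiv AC$. By \ref{AB}, $\angle BAC$ has a bisector $\overrightarrow{AG}$, so $\angle BAG\equiv\angle CAG$. Since $\overrightarrow{AG}$ lies between $\overrightarrow{AB}$ and $\overrightarrow{AC}$, the Crossbar Theorem \cite[p.~82]{Greenberg1993} gives a point $M$ with $B-M-C$ on $\overrightarrow{AG}$. The plan is to compare the two triangles $\triangle ABM$ and $\triangle ACM$ by SAA, using [\textbf{PA}] to supply the base angles. Our aim is to obtain $BM\equiv CM$ and $\angle AMB\equiv\angle AMC$.

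By [\textbf{PA}], $\angle ABC\equiv\angle ACB$. Since $M$ lies on segment $BC$, the rays $\overrightarrow{BM}=\overrightarrow{BC}$ and $\overrightarrow{CM}=\overrightarrow{CB}$ coincide. Hence $\angle ABM=\angle ABC$ and $\angle ACM=\angle ACB$, and so $\angle ABM\equiv\angle ACM$. The two triangles then share the side $AM$, which is congruent to itself by \ref{C2}. They also have $\angle BAM\equiv\angle CAM$ and $\angle ABM\equiv\angle ACM$. The side $AM$ is adjacent to the angle at $A$ and opposite the angle at $B$ (respectively $C$). This is exactly the configuration of SAA: one pair of congruent sides, the angles adjacent to them congruent, and the angles opposite them congruent.

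Applying SAA gives $\triangle ABM\equiv\triangle ACM$. The corresponding sides then give $BM\equiv CM$, so $M$ is a midpoint of $BC$. The corresponding angles give $\angle AMB\equiv\angle AMC$. These two angles are supplementary, since $B-M-C$. An angle congruent to its supplement is right by definition, so $\overleftrightarrow{AM}$ is perpendicular to $BC$.

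The main point to check is that SAA is applied with the correct correspondence. In SAA as a primitive criterion, the side must be opposite one of the given angles in both triangles, consistently: here $AM$ is opposite $\angle ABM$ and opposite $\angle ACM$, and the included-adjacent angle at $A$ matches. Since SAS is not available, one cannot shortcut through it; the Crossbar step and the identification of $\angle ABM$ with $\angle ABC$ must be justified purely from the order axioms, as above.
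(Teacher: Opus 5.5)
Your proposal is correct and follows essentially the same route as the paper: you take the bisector from \ref{AB}, use the Crossbar Theorem to get $M$ with $B-M-C$, use [\textbf{PA}] for $\angle ABM\equiv\angle ACM$, and apply SAA to $\triangle ABM$ and $\triangle ACM$ (common side $AM$) to obtain $BM\equiv CM$ and congruent supplementary angles at $M$. Your explicit checks that $\overrightarrow{BM}=\overrightarrow{BC}$ and that the SAA vertex correspondence $B\leftrightarrow C$ is consistent only spell out steps the paper leaves implicit.
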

\begin{proof}
Consider the isosceles triangle $\triangle ABC$ with $AB\equiv AC$. Let $\overrightarrow{AM}$ be the bisector of $\angle BAC$, where $M$ denotes the intersection point with $BC$, whose existence follows from the Crossbar Theorem. Consider the two triangles $\triangle ABM$ and $\triangle ACM$. Then $AM\equiv AM$, $\angle BAM\equiv\angle CAM$, and, by applying [\textbf{PA}] to $\triangle ABC$, we have $\angle ABC\equiv\angle ACM$. Therefore, applying SAA, we obtain $\triangle ABM\equiv\triangle ACM$. It follows, in particular, that $\angle AMB\equiv\angle AMC$, that is, the angle is right, and $BM\equiv CM$; therefore, $M$ is the midpoint of $BC$.
\end{proof}
The uniqueness of the angle bisector is obtained by assuming [\textbf{RCT}]:
\begin{thm}\label{PEN_RCT_AB}
If we assume [\textbf{RCT}], then any angle that has a bisector has a unique bisector.
\end{thm}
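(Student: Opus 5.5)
The plan is to argue by contradiction, in the same spirit as the proof of Theorem \ref{PEN_EU4P}, using the uniqueness clause of [\textbf{RCT}] applied to the angle with itself. Let $\angle BAC$ be given and suppose it has two distinct bisectors $\overrightarrow{AD}$ and $\overrightarrow{AE}$. Both rays lie between $\overrightarrow{AB}$ and $\overrightarrow{AC}$, and they satisfy
\begin{equation*}
\angle BAD\equiv\angle DAC,\qquad \angle BAE\equiv\angle EAC.
\end{equation*}

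The first step is to order the two rays. Both lie in the interior of $\angle BAC$, hence on the same side of $\overleftrightarrow{AB}$, and they are distinct. By the betweenness facts for rays (Proposition \ref{PEN_PO_08} together with the standard order results of \cite{Greenberg1993}), one of them lies between $\overrightarrow{AB}$ and the other. Without loss of generality, $\overrightarrow{AD}$ lies between $\overrightarrow{AB}$ and $\overrightarrow{AE}$. Then $\angle BAD<\angle BAE$ by definition of $<$. Symmetrically, $\overrightarrow{AE}$ lies between $\overrightarrow{AD}$ and $\overrightarrow{AC}$, so $\angle EAC<\angle DAC$. This last symmetric claim is a routine but necessary ordering verification.

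The second step is to derive the contradiction from Proposition \ref{PEN_CP12}, which is available because [\textbf{RCT}] is assumed. From $\angle BAD<\angle BAE$ and $\angle BAE\equiv\angle EAC$, part (ii) gives $\angle BAD<\angle EAC$. Combining this with $\angle EAC<\angle DAC$, transitivity (part (iv)) gives $\angle BAD<\angle DAC$. This contradicts $\angle BAD\equiv\angle DAC$ by trichotomy (part (i)). Hence $\overrightarrow{AD}=\overrightarrow{AE}$.

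An alternative that avoids the ordering machinery goes through the uniqueness in [\textbf{RCT}] directly. Apply [\textbf{RCT}] to $\angle BAC\equiv\angle CAB$, with the roles of the sides swapped, taking the ray $\overrightarrow{AD}$. The corresponding ray $\overrightarrow{AH}$ in $\angle CAB$ must satisfy $\angle DAC\equiv\angle HAB$ and $\angle BAD\equiv\angle CAH$. Since $\overrightarrow{AD}$ is a bisector, $\overrightarrow{AH}=\overrightarrow{AD}$ qualifies, but $\overrightarrow{AE}$ need not, so this alone does not separate the two bisectors. For that reason I will rely on the ordering argument.

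The main obstacle is the first step: justifying synthetically, without congruence, that two distinct interior rays are linearly ordered, and that the complementary betweenness ($\overrightarrow{AE}$ between $\overrightarrow{AD}$ and $\overrightarrow{AC}$) holds. I would handle this with the Crossbar Theorem. Both rays meet segment $BC$, at points $D'$ and $E'$ say. Because $D'$ and $E'$ are distinct points of $BC$, the order axioms give either $B-D'-E'$ or $B-E'-D'$, and in the first case $B-D'-E'-C$. From this four-point ordering both betweenness statements follow.
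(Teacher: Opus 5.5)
Your proposal is correct and follows essentially the paper's route: you order the two bisector rays, get strict inequalities of opposite direction on the two sides of the angle, transport them through the bisector congruences via Proposition \ref{PEN_CP12}, and contradict trichotomy. Your version is slightly more careful in two places. You justify the linear ordering of the two interior rays via the Crossbar Theorem, and you start from two distinct rays rather than from an assumed inequality between $\angle BAQ$ and $\angle BAM$; this avoids the tacit appeal to uniqueness in [\textbf{C4}] that the paper needs, both to read off ``interior'' from $<$ and to close the case $\angle BAQ\equiv\angle BAM$.
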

\begin{proof}
Consider the angle $\angle BAC$ and let $\overrightarrow{AQ}$ and $\overrightarrow{AM}$ be two of its bisectors. Then $\angle BAQ\equiv\angle CAQ$ and $\angle BAM\equiv\angle CAM$. [\textbf{RCT}] allows us to establish the properties of the order relation $<$ without directly using SAS/\ref{C6} (see Proposition \ref{PEN_CP12}). Suppose, therefore, that
\begin{equation*}
  \angle BAQ<\angle BAM.
\end{equation*}
By the definition of $<$, $\overrightarrow{AQ}$ is interior to $\angle BAM$, which implies that $\overrightarrow{AM}$ is interior to $\angle CAQ$, that is,
\begin{equation*}
  \angle CAM<\angle CAQ.
\end{equation*}
Since congruent angles can be substituted in an inequality, we obtain
\begin{equation*}
  \angle BAM<\angle BAQ
\end{equation*}
which contradicts the trichotomy of the order relation. Conversely, assuming $\angle BAM<\angle BAQ$ would, by analogous reasoning, yield $\angle BAQ<\angle BAM$. Therefore, $\overrightarrow{AQ}=\overrightarrow{AM}$, that is, the angle bisector is unique.
\end{proof}
Using \ref{HA}, we can prove the following result.
\begin{thm}\label{PEN_HA}
If we assume \ref{HA}, then two lines intersecting a line perpendicularly on the same side are parallel.
\end{thm}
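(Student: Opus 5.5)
The plan is to argue by contradiction, following the classical proof that two perpendiculars to a line are parallel, but with \ref{HA} playing the role that SAS usually plays. Let $\ell$ be a line, and let $m$ and $n$ be lines perpendicular to $\ell$ at distinct points $A$ and $B$. The right angles are on the same side, in the sense fixed in Remark \ref{PEN_REM_HA}. Suppose $m$ and $n$ meet at a point $P$, which we may take on one side of $\ell$. Then $\triangle PAB$ has right angles at both $A$ and $B$.

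The goal is to produce a second intersection point $P'$ on the opposite side of $\ell$. That would give two distinct lines $m$ and $n$ through both $P$ and $P'$, contradicting [\textbf{I}1].

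\emph{Constructing $P'$.} Using \ref{C1}, choose $P'$ on the ray of $m$ opposite to $\overrightarrow{AP}$ with $AP'\equiv AP$. Using \ref{C4}, copy $\angle PBA$ to the other side of $\ell$ at $B$. Since $\angle PBA$ is right, and the supplement of $\angle PBA$ is right in the required sense, the copied ray should lie along $n$. The cleaner way to organise this is to compare triangles $\triangle PAB$ and $\triangle P'AB$ directly and apply \ref{HA}.

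\emph{The key step.} I would take the right angle of each triangle to be at $A$, with common hypotenuse-side data coming from the leg $AB$. \ref{HA}, however, requires congruent hypotenuses and an acute angle. In $\triangle PAB$ the right angle at $A$ is opposite the hypotenuse $PB$, and the angle at $P$ is the acute angle opposite the leg $AB$. The real plan is therefore:
\begin{itemize}
  \item Work with the angles at $B$, using \ref{C4} to lay off $\angle ABP$ below $\ell$, obtaining a ray $\overrightarrow{BQ}$.
  \item Show that $\overrightarrow{BQ}$ lies on $n$.
  \item Apply \ref{HA}, or the order argument of Proposition \ref{PEN_CP12}, to conclude that the constructed point coincides with $P'$.
\end{itemize}

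\emph{The main obstacle.} Without SAS and without the congruence of all right angles (Theorem \ref{PEN_EU4P} needs [\textbf{SA}] and [\textbf{RCT}]), it is hard to certify that the copied angle lands on the line $n$ and that the resulting triangles match the exact hypotheses of \ref{HA}. This includes which supplement is the right angle, as fixed in Remark \ref{PEN_REM_HA}. I would first try to arrange the configuration so that $\angle ABP$ and its supplement are the two right angles at $B$ given by the hypothesis, avoiding any appeal to Theorem \ref{PEN_EU4P}. Then \ref{HA} would transfer the congruence $AP\equiv AP'$ into the point $P'$ lying on $n$.
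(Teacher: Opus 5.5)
Your proposal stops at the step that carries the proof, and that step cannot be completed in the reflection configuration you chose. Even granting that the ray copied at $B$ lies along $n$, you would still have to show that it passes through the point $P'$ of $m$ with $AP'\equiv AP$. That means transferring information from $\triangle PAB$ to $\triangle P'AB$. The only data available there are $AP\equiv AP'$, the common side $AB$, and the right angles at $A$: two sides and the included angle, which is an instance of SAS, the very axiom that has been removed.

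\ref{HA} cannot stand in for it, because its hypotheses are not met. The hypotenuses $PB$ and $P'B$ are not known to be congruent, nothing is known about the angles at $P$ and $P'$, and the angle of $\triangle PAB$ at $B$ is right, not acute. The order argument of Proposition~\ref{PEN_CP12} is not available either, since it rests on [\textbf{RCT}], which is not a hypothesis of this theorem. So the second common point of $m$ and $n$, and with it the contradiction with \ref{I1}, is never obtained.

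The missing idea is to let the two triangles share the hypotenuse rather than a leg, by using a third point of $\ell$ instead of reflecting across $\ell$. Choose $C$ on $\ell$ outside the segment $AB$, say $C-A-B$ (by \ref{O2}). Then $\triangle PAC$ and $\triangle PBC$ have right angles at $A$ and $B$ and the common hypotenuse $PC$. They also have the same angle at $C$, because $A$ and $B$ lie on the same ray from $C$. \ref{HA} gives $CA\equiv CB$, and since $A\neq B$ lie on one ray from $C$, this contradicts the uniqueness clause of \ref{C1}.

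This is the paper's argument. The contradiction comes from \ref{C1} rather than \ref{I1}, and nothing needs to be constructed on the other side of $\ell$. Your concern about supplements also does not arise. In each triangle, the supplement singled out by Remark~\ref{PEN_REM_HA} is the other angle at the foot on $P$'s side of $\ell$, which is exactly what perpendicularity ``on the same side'' supplies, so Theorem~\ref{PEN_EU4P} is not needed.
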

\begin{proof}
\begin{figure}[ht!]
\centering
\begin{tikzpicture}[scale=1,thin]
\coordinate (A) at (0,0);
\coordinate (Q) at (4,0);
\coordinate (Qprime) at (4.5,0);

\fill (A) circle (1.5pt) node[below] {$A$};
\fill (Q) circle (1.5pt) node[below] {$Q$};
\fill (Qprime) circle (1.5pt) node[below] {$Q'$};

\coordinate (P) at (4.15,2);

\draw[semithick] (P) -- (Q);
\draw[semithick] (P) -- (Qprime);
\draw[semithick] (A) -- (P);
\draw[semithick] (A) -- (Qprime);

\fill (P) circle (1.5pt) node[above] {$P$};
\RightAngle{Q}{A}{P}{0.10}{0.15}
\RightAngle{Qprime}{A}{P}{0.08}{0.15}
\end{tikzpicture}
\caption{Theorem \ref{PEN_HA}}\label{PEN_HA_f}
\end{figure}
We first note that it is important to specify ``on the same side'' in order to consider the same pair of supplementary angles as right angles. Suppose, for contradiction, that there exist two perpendiculars through $P$, and denote by $Q$ and $Q'$ their respective intersections with $l$. If we consider a point $A$ on $l$ outside the segment $QQ'$, then, as shown in Figure \ref{PEN_HA_f}, we have two right triangles $\triangle PQA$ and $\triangle PQ'A$ that are congruent by \ref{HA}. Indeed, since $A$ lies outside the segment $QQ'$, either $A-Q-Q'$ or $A-Q'-Q$, and hence $\angle PAQ\equiv\angle PAQ'$. It follows that $AQ\equiv AQ'$, with $Q\neq Q'$, which contradicts \ref{C1}.
\end{proof}
If we use together \ref{HA} and \ref{RCT} we can prove a particular case of the exterior angle theorem for right angle:
\begin{thm}\label{PEN_HA_EA}
If \ref{HA}, \ref{RCT} hold, then:
\begin{enumerate}[label=\textrm{[}\textbf{EA}\textrm{]}]
  \item If an angle of a triangle is congruent to its supplement with respect one of its side, then the angle opposite to this side is acute.
\end{enumerate}
\end{thm}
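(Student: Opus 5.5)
I read the statement for a triangle $\triangle PQA$ in which the angle $\angle PQA$ at $Q$ is congruent to its supplement $\angle PQQ'$, where $A-Q-Q'$. The common side of the two supplements is $\overrightarrow{QP}$, in line with the convention of Remark \ref{PEN_REM_HA}. The side opposite the angle at $Q$ in this convention is $QP$, so the claim is that $\angle QAP$ is acute. Since we do not have \ref{SA}, we do not know that all right angles are congruent. I therefore take ``acute'' to mean $\angle QAP<\angle PQA$, i.e. less than the right angle already present in the figure. The plan is an argument by contradiction based on the trichotomy of Proposition \ref{PEN_CP12}(i), which is available because \ref{RCT} is assumed. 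The goal is to reduce each of the cases $\angle QAP\equiv\angle PQA$ and $\angle PQA<\angle QAP$ to the situation forbidden by Theorem \ref{PEN_HA}: two distinct perpendiculars from one point to the line $l=\overleftrightarrow{AQ}$, on the same side.

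\emph{Case $\angle PQA<\angle QAP$.} By the definition of $<$, there is a ray $\overrightarrow{AR}$ interior to $\angle QAP$ with $\angle QAR\equiv\angle PQA$. By the Crossbar Theorem it meets $QP$ at a point $S$ with $Q-S-P$. Now $\overrightarrow{AS}$ and $\overrightarrow{QS}$ form with $l$ angles congruent to $\angle PQA$, on the same side of $l$ as $P$.

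To make $\overrightarrow{AS}$ a perpendicular in the sense of Theorem \ref{PEN_HA}, I must also show that $\angle QAS$ is congruent to its supplement on the correct side. I would obtain this by applying \ref{RCT} to the congruence $\angle PQA\equiv\angle PQQ'$, transported to the vertex $A$ via \ref{C4}. Alternatively, I would reflect the construction: lay off at $Q$ a copy of the angle at $A$ and use the uniqueness in \ref{C4}. With this in hand, $S$ has two perpendiculars to $l$, through $Q$ and through $A$, on the same side. This contradicts Theorem \ref{PEN_HA}, or directly \ref{HA}, applied as in its proof to $\triangle SQX$ and $\triangle SAX$ with $X$ on $l$ outside the segment $AQ$.

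\emph{Case $\angle QAP\equiv\angle PQA$.} The same argument applies with $S=P$: the line $\overleftrightarrow{AP}$ would be a second perpendicular from $P$ to $l$. This again contradicts Theorem \ref{PEN_HA}, after the same supplement check at $A$. The uniqueness part of Proposition \ref{PEN_CP12}(i), which rests on the uniqueness in \ref{RCT}, then leaves only $\angle QAP<\angle PQA$.

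The main obstacle is the supplement check in both cases. Because \ref{SA} is not assumed, an angle at $A$ that is congruent to the right angle at $Q$ need not be congruent to its own supplement. I would attack this by transferring the pair $(\angle PQA,\angle PQQ')$ to $A$ with \ref{C4}, and then using \ref{RCT} on the straight configuration so that the uniqueness of the corresponding ray forces the two supplements at $A$ to match. If that fails, the fallback is to avoid the supplement entirely. I would use \ref{HA} directly on two right triangles sharing the hypotenuse $AS$ (or $AP$) and the acute angle at a suitably chosen point $X$ of $l$. This yields two distinct points of $l$ at congruent distances from $X$ on the same ray, which contradicts \ref{C1}, as in the proof of Theorem \ref{PEN_HA}.
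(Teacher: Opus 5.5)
Your plan follows the paper's proof. Both use the trichotomy of Proposition~\ref{PEN_CP12}, and both reduce the equality case and the case $\angle PQA<\angle QAP$ to two perpendiculars to $l$ from one point, contradicting Theorem~\ref{PEN_HA}. Getting $S$ from the Crossbar Theorem is a slightly cleaner version of the paper's ray at the other base vertex plus plane separation. The gap is exactly the step you flag, and it is needed in both cases. Theorem~\ref{PEN_HA} (through [\textbf{HA}] and Remark~\ref{PEN_REM_HA}) applies only when the two angles a line makes with $l$ on the side of $P$ are congruent. The congruence $\angle QAS\equiv\angle PQA$ does not by itself give $\angle QAS\equiv\angle SAY$, where $Q-A-Y$. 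None of your remedies supplies this:
\begin{itemize}
\item [\textbf{RCT}] is a statement about two congruent \emph{angles}. The straight configurations at $Q$ and $A$ are not angles, and no axiom makes them congruent, so [\textbf{RCT}] cannot be used ``on the straight configuration''.
\item Laying off the angle at $A$ at $Q$ and invoking uniqueness in [\textbf{C4}] presupposes $\angle SAY\equiv\angle PQQ'$, which is the claim itself.
\item The [\textbf{HA}] fallback needs a right triangle whose right angle is at $A$, which is the same fact again.
\end{itemize}
The paper's own proof has the same hole: it simply asserts that $\overrightarrow{BE}$ is perpendicular to $\overleftrightarrow{AB}$ and that $\angle CBA$ ``would also be right''.

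The missing idea is to apply [\textbf{RCT}] to a \emph{proper} angle strictly containing the right angle, and to argue by trichotomy. Let $\rho=\angle PQA$, and suppose $\angle SAY<\rho$. Lay off $\angle SAY$ from $\overrightarrow{QP}$ towards $Q'$. By trichotomy the new ray $\overrightarrow{QT}$ lies strictly inside $\angle PQQ'$, so $\overrightarrow{QP}$ is inside the proper angle $\angle AQT$. Copy $\angle AQT$ to $\angle QAT'$ on the side of $P$. Then [\textbf{RCT}] yields a ray inside $\angle QAT'$ making an angle $\equiv\rho$ with $\overrightarrow{AQ}$. By uniqueness in [\textbf{C4}] this ray is $\overrightarrow{AS}$, and moreover $\angle SAT'\equiv\angle PQT\equiv\angle SAY$. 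But $\overrightarrow{AT'}$ lies strictly inside $\angle SAY$, contradicting uniqueness in [\textbf{C4}]. If instead $\rho<\angle SAY$, run the same argument with $Q$ and $A$ interchanged: lay off $\rho$ from $\overrightarrow{AS}$ towards $Y$ to get $\overrightarrow{AU}$, and copy $\angle QAU$ back to $Q$. This gives a ray strictly inside $\angle PQQ'$ forming with $\overrightarrow{QP}$ an angle congruent to $\angle PQQ'$, again impossible. Hence $\angle SAY\equiv\angle QAS$. In other words, an angle congruent to a right angle is itself right, using only [\textbf{RCT}], [\textbf{C4}], [\textbf{C5}] and the ordering of angles, with no appeal to [\textbf{SA}]. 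With this lemma (taking $S=P$ in the equality case), both of your cases go through as written.
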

\begin{proof}
Let $\triangle ABC$ be a triangle such that $\angle BAC$ is congruent to its supplement with respect to $AC$, let $D$ such that $D-A-B$, we have $\angle CAD\equiv\angle CAB$.
Let $F$ be a point such that $A-B-F$, by \ref{C4}, construct the ray $\overrightarrow{BE}$ such that $\angle ABE\equiv\angle CAB$, and $E$ and $C$ lie on the same side of $\overleftrightarrow{AB}$. Note that $\overrightarrow{BE}$ and $\overrightarrow{AC}$ are perpendicular to $\overleftrightarrow{AB}$.  By Theorem \ref{PEN_HA} $\overrightarrow{BE}$ and $\overrightarrow{AC}$ cannot intersect otherwise they should coincide and $A$, $B$ and $C$ are on the same line. This also rules out $\angle CAB\equiv \angle CBA$: in that case $\angle CBA$ would also be right, so $\overrightarrow{CA}$ and $\overrightarrow{CB}$ would be two perpendiculars to $\overleftrightarrow{AB}$ through the common external point $C$; by the argument of Theorem \ref{PEN_HA} (applied with $C$ in place of $P$), their feet would coincide, forcing $A=B$, contrary to hypothesis (note that this uses only the hypothesis $\angle CAD\equiv\angle CAB$ that $\angle CAB$ itself is right, so \ref{SA} is not needed here).
If $\angle CAB< \angle CBA$, then $\overrightarrow{BC}$  and is inside $\angle EBF$. Then $A$ and $C$ are on the opposite side of $\overleftrightarrow{BE}$. Otherwise since $C$ and $F$ are on the same side of $\overleftrightarrow{BE}$, then by \ref{O4}(i) also $A$ and $F$ should be on same side, in contradiction with $A-B-F$. It follows that $\overrightarrow{BE}$ and $\overrightarrow{AC}$ intersect and this contradicts Theorem \ref{PEN_HA}. Hence by \ref{RCT} trichotomy holds and then  $\angle CAB>\angle CBA$.
\end{proof}
We are now able to establish the existence and uniqueness of the orthogonal projection of a point onto a line.
\begin{thm}\label{PEN_LLL_PO}
If the principles of one of the following groups are assumed:
\begin{enumerate}[label=(\roman*)]
\item SSS, [\textbf{RCT}], \ref{MS} and \ref{HA};
\item SAA, [\textbf{RCT}], \ref{AB} and [\textbf{PA}].
\end{enumerate}
Then there exists a unique perpendicular to the line $l$ through a point $P$ external to it. That is, there exists a unique orthogonal projection of $P$ onto $l$.
\end{thm}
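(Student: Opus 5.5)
Existence and uniqueness are handled separately, and each uses the same construction in both groups (i) and (ii); they differ only in which earlier result supplies the perpendicularity.

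\emph{Existence.} Let $P\notin l$ and choose two distinct points $A,B$ on $l$. By \ref{C4} lay off on the opposite side of $l$ from $P$ a ray $\overrightarrow{AR}$ with $\angle BAR\equiv\angle BAP$. By \ref{C1} choose $P'$ on $\overrightarrow{AR}$ with $AP'\equiv AP$. Since $P$ and $P'$ are on opposite sides of $l$, the segment $PP'$ meets $l$ at a point $Q$.

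If $Q=A$, then $\overrightarrow{AB}$ lies inside $\angle PAP'$ and bisects it, or the relevant supplement does. If $Q\neq A$, the triangle $\triangle APP'$ is isosceles with apex $A$, and $\overrightarrow{AQ}$ is a bisector of $\angle PAP'$. This holds because $Q$ lies on $l$ inside the angle, by Proposition \ref{PEN_PO_08} and the fact that $\overrightarrow{AB}$ or its opposite ray is the one making congruent angles. To handle the opposite ray cleanly I would choose $B$ so that $Q$ lies on $\overrightarrow{AB}$, or otherwise argue that the opposite-ray case is symmetric.

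In case (ii), the bisector of $\angle PAP'$ is unique by Theorem \ref{PEN_RCT_AB}, so it equals $\overrightarrow{AQ}$. Theorem \ref{PEN_LAA_RAE} then gives that $\overleftrightarrow{PP'}$ meets $\overleftrightarrow{AQ}=l$ perpendicularly at $Q$.

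In case (i), Theorem \ref{PEN_LLL_RAE}(i) gives a bisector through the midpoint $M$ of $PP'$, and that bisector is perpendicular to $PP'$. Theorem \ref{PEN_RCT_AB} identifies it with $\overrightarrow{AQ}$, so $M=Q$. Alternatively, apply SSS to $\triangle APQ$ and $\triangle AP'Q$ once $PQ\equiv P'Q$ is known, which again follows from $Q=M$. The degenerate case $Q=A$ is handled by choosing $A$ differently: if the constructed line happens to pass through $A$, the angles $\angle PAB$ and its supplement are already congruent, so $\overleftrightarrow{PA}$ is itself perpendicular.

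\emph{Uniqueness.} Suppose two perpendiculars from $P$ meet $l$ at $Q\neq Q'$. We must make sure the two right angles are taken with the consistent choice of supplement required in Remark \ref{PEN_REM_HA}.

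In case (i), Theorem \ref{PEN_HA} applies directly, since \ref{HA} is assumed.

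In case (ii), I first recover \ref{HA}. Theorem \ref{PEN_LAA_HA} needs \ref{SA}, which is not listed, so instead I argue directly on the triangle $\triangle PQQ'$. It has two angles at $Q$ and $Q'$, each congruent to its own supplement. Apply [\textbf{PA}] in converse form, or use SAA: compare $\triangle PQQ'$ with $\triangle PQ'Q$, with the common side $PQ$ facing itself. The cleaner route is to take the midpoint $N$ of $QQ'$ from Theorem \ref{PEN_LAA_RAE}, after showing $PQ\equiv PQ'$, and derive a contradiction from two distinct right-angle rays at $N$.

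I expect this step, uniqueness in case (ii) without \ref{SA} or \ref{HA}, to be the main obstacle. The first thing I would try is to mimic the proof of Theorem \ref{PEN_HA}. Take $A$ on $l$ outside $QQ'$. The triangles $\triangle PQA$ and $\triangle PQ'A$ share the side $PA$ and the angle at $A$, and their angles at $Q$ and $Q'$ are right. Comparing these right angles requires all right angles to be congruent. Failing that, I would use the Pons Asinorum route: reflect through the bisector construction above so that both feet produce congruent configurations, then invoke uniqueness of the bisector (Theorem \ref{PEN_RCT_AB}) to force $Q=Q'$.
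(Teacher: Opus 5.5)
Your existence construction is the paper's: reflect $P$ across $l$ by laying off $\angle PAB$ on the far side and transporting $AP$. In (i) you then use the midpoint of $PP'$, SSS and uniqueness of the bisector (Theorem~\ref{PEN_RCT_AB}); in (ii) you use Theorem~\ref{PEN_LAA_RAE}. Your uniqueness argument in (i) is exactly the paper's appeal to Theorem~\ref{PEN_HA}.

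The genuine gap is uniqueness in case (ii), which you never establish; every route you sketch stalls at the same point. Comparing $\triangle PQQ'$ with $\triangle PQ'Q$ by SAA, or imitating Theorem~\ref{PEN_HA} with a point $A$ of $l$ outside $QQ'$, requires $\angle PQQ'\equiv\angle PQ'Q$. That is, right angles at two different vertices must be congruent, which in this paper is Theorem~\ref{PEN_EU4P} and needs [\textbf{SA}]. Getting $PQ\equiv PQ'$ first would need a converse of [\textbf{PA}], which is not assumed and whose hypothesis would again be $\angle PQQ'\equiv\angle PQ'Q$. Even granting $PQ\equiv PQ'$, the midpoint $N$ of $QQ'$ from Theorem~\ref{PEN_LAA_RAE} only produces a third foot of a perpendicular from $P$, which reproduces the configuration rather than contradicting it.

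The paper disposes of this step by saying uniqueness follows from Theorem~\ref{PEN_HA}. In case (ii) that presupposes [\textbf{HA}], which the paper obtains only through Theorem~\ref{PEN_LAA_HA}, i.e.\ from SAA, [\textbf{RCT}] and [\textbf{SA}]. So your diagnosis is correct: the argument tacitly uses [\textbf{SA}], which is missing from group (ii) here but is included in group (ii) of Theorem~\ref{PEN_LLL_LAL}, where this result is applied. The missing step is to assume [\textbf{SA}]. Theorem~\ref{PEN_EU4P} then makes all right angles congruent, Theorem~\ref{PEN_LAA_HA} gives [\textbf{HA}], and Theorem~\ref{PEN_HA} yields uniqueness exactly as in case (i).

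Your existence part also leaves a case open, which the paper passes over too. Its step ``$\overleftrightarrow{AQ}=l$ by uniqueness of the bisector'' presupposes that $\overrightarrow{AB}$ itself, and not its opposite ray, bisects $\angle PAP'$. Neither of your remedies works as stated:
\begin{enumerate}[label=(\alph*)]
\item Replacing $B$ by a point on the other side of $A$ changes $P'$ and hence $Q$, and nothing in the listed principles guarantees that the new foot falls on the new ray.
\item If $Q$ lies on the ray opposite to $\overrightarrow{AB}$, the congruence $\angle PAQ\equiv\angle P'AQ$ says precisely that the supplements of the congruent angles $\angle PAB$ and $\angle P'AB$ are congruent. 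That is an instance of [\textbf{SA}], not a symmetry.
\end{enumerate}
In case (ii), once [\textbf{SA}] is added as above, this case becomes immediate. In case (i), where [\textbf{SA}] is not assumed, you would need a separate argument showing the construction can be started so that $Q$ falls on $\overrightarrow{AB}$.
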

\begin{proof}
\begin{figure}[ht!]
\centering
\begin{tikzpicture}[scale=1,thick]
\coordinate [label=left:$A$] (A) at (0,0);
\coordinate [label=above :$r\quad$] (X) at (6,-3);
\coordinate [label=below right:$P$] (P) at (3,1.5);
\coordinate [label=below right:$Q$] (Q) at (3,0);
\coordinate [label=below left:$B$] (B) at (6,0);
\coordinate [label=below left:$l$] (l) at (9,0);
\coordinate [label=above right:$P'$] (P') at (3,-1.5);
\draw plot [mark=*,mark size=1pt] coordinates {(A)};
\draw plot [mark=*,mark size=1pt] coordinates {(P)};
\draw plot [mark=*,mark size=1pt] coordinates {(Q)};
\draw plot [mark=*,mark size=1pt] coordinates {(P')};
\draw plot [mark=*,mark size=1pt] coordinates {(B)};
\draw [-latex,thin] (A) -- (l);
\draw [-,thin] (A) -- (X);
\draw [-,thin] (P) -- (P');
\draw [-,thin] (A) -- (P);
\coordinate [label=below right:$\:$] (P1) at (1.5,0.75+0.1);
\coordinate [label=below right:$\:$] (P2) at (1.5,0.75-0.1);
\draw [-,thin] (P1) -- (P2);
\coordinate [label=below right:$\:$] (P1') at (1.5,-0.75+0.1);
\coordinate [label=below right:$\:$] (P2') at (1.5,-0.75-0.1);
\draw [-,thin] (P1') -- (P2');
\draw [-,thin] (1,0) arc (0:{atan(1.5/3)}:1);
\draw [-,thin] (1,0) arc (0:{-atan(1.5/3)}:1);
\end{tikzpicture}
\caption{Theorem \ref{PEN_LLL_PO}-Proof schematic}
\label{PEN_LLL_PO_f}
\end{figure}
By hypothesis, $P$ does not lie on $l$. Let $A$ and $B$ be two distinct points of $l$. If one of the two angles $\angle PAB$ and  $\angle PBA$ is right, the result follows immediately. Suppose that both are non-right. On the side opposite to $P$ with respect to $l$, using \ref{C4}, draw from $A$ a ray $r$ that forms with $l$ an angle congruent to $\angle PAB$. Using \ref{C1}, there exists a point $P'$ on $r$ such that $AP\equiv AP'$. We therefore have the isosceles triangle $\triangle APP'$, and hence $\angle APP'\equiv\angle AP'P$, since in (i) SSS implies [\textbf{PA}] (Theorem \ref{PEN_LLL_PA}), whereas in (ii) [\textbf{PA}] is directly an assumption. We now treat (i) and (ii) separately:
\begin{enumerate}[label=(\roman*)]
\item Let $Q$ be the midpoint of $PP'$. Then $\triangle APQ\equiv \triangle AP'Q$ by SSS, and in particular $\angle PAQ\equiv \angle P'AQ$. By Theorem \ref{PEN_RCT_AB} on the uniqueness of the angle bisector, it follows that $\overleftrightarrow{AQ}=l$. Moreover, $\angle PQA\equiv \angle P'QA$, and hence the angle is right. Therefore, the line $\overleftrightarrow{PP'}$ intersects $l$ orthogonally.
\item By construction, $l$ is the angle bisector passing through the midpoint $Q$ of $PP'$, and $\angle PQA\equiv \angle P'QA$, so the angle is right (Theorem \ref{PEN_LAA_RAE}).
\end{enumerate}
Therefore, the line $\overleftrightarrow{PP'}$ intersects $l$ orthogonally. Uniqueness follows directly by applying Theorem \ref{PEN_HA}.
\end{proof}
We are therefore able to prove the following main theorem.
\begin{thm}\label{PEN_LLL_LAL}
If the principles of one of the following groups are assumed to hold:
\begin{enumerate}[label=(\roman*)]
\item SSS, [\textbf{RCT}], \ref{MS} and \ref{HA};
\item SAA, [\textbf{RCT}], \ref{AB}, \ref{SA} and [\textbf{PA}].
\end{enumerate}
Then SAS follows.
\end{thm}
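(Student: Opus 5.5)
Following the abstract's remark that the two routes converge once a common hypotenuse-angle criterion is available, I will first reduce case (ii) to case (i)'s toolkit. In case (ii), Theorem \ref{PEN_LAA_HA} gives \ref{HA} from SAA, [\textbf{RCT}], \ref{SA}. Theorem \ref{PEN_LLL_PO} then gives, in both cases, existence and uniqueness of the foot of the perpendicular from a point to a line. Theorem \ref{PEN_HA_EA} gives [\textbf{EA}]: the angles of a right triangle opposite the legs are acute, and the foot of an altitude behaves well. The final argument will then use only \ref{HA}, [\textbf{RCT}] (through Proposition \ref{PEN_CP12}), and the orthogonal projection.

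Let $\triangle ABC$ and $\triangle DEF$ satisfy $AB\equiv DE$, $\angle BAC\equiv\angle EDF$, $AC\equiv DF$. Drop the perpendicular from $B$ to $\overleftrightarrow{AC}$ with foot $H$, and from $E$ to $\overleftrightarrow{DF}$ with foot $K$. If $\angle BAC$ is right, then $H=A$ and $K=D$. Otherwise I will argue that $H$ lies on $\overrightarrow{AC}$ exactly when $K$ lies on $\overrightarrow{DF}$. The tool is [\textbf{EA}] together with the order of angles: if $\angle BAC$ is acute, $H$ cannot lie on the opposite ray, since otherwise the triangle $ABH$ would have an angle at $A$ greater than a right angle by the supplement comparison (use \ref{SA} in case (ii); in case (i) compare via Proposition \ref{PEN_CP12}). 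Then \ref{HA} applied to the right triangles $\triangle ABH$ and $\triangle DEK$ gives $AH\equiv DK$ and $BH\equiv EK$. In the obtuse case I apply it with the supplementary angles, which are congruent by \ref{SA}; in case (i) I first derive \ref{SA} from the right-angle machinery, or else handle the obtuse case by working at vertex $C$.

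Next I locate $H$ relative to $C$, and $K$ relative to $F$. Using $AC\equiv DF$, $AH\equiv DK$, and segment subtraction/addition (\ref{C3} and the segment ordering of \cite[Proposition 3.13]{Greenberg1993}), I get $HC\equiv KF$, with the same betweenness configuration in both triangles. Now $\triangle BHC$ and $\triangle EKF$ are right triangles with congruent legs $BH\equiv EK$ and $HC\equiv KF$. I need a leg-leg criterion, which is the main obstacle, since it is essentially SAS for right triangles. In case (i) it follows from SSS once the hypotenuses $BC\equiv EF$ are known, which is circular. So instead I plan the following. Lay off on $\overrightarrow{EF}$ the point $F'$ with $EF'\equiv BC$. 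Compare $\angle HCB$ with $\angle KFE$ by trichotomy (Proposition \ref{PEN_CP12}). If they are congruent, \ref{HA} gives the congruence $\triangle BHC\equiv\triangle EKF$ immediately. If, say, $\angle HCB<\angle KFE$, take the ray at $F$ realizing the smaller angle. By the Crossbar Theorem it meets $EK$ at a point $E'$ strictly between $E$ and $K$. \ref{HA} then gives $E'K\equiv BH\equiv EK$ (in case (i)) or, in case (ii), SAA gives the same conclusion. This contradicts $E'K<EK$, in exactly the pattern of the argument preceding Theorem \ref{PEN_ASA}.

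Having $\triangle BHC\equiv\triangle EKF$, I obtain $BC\equiv EF$. Case (i) then closes by SSS. For case (ii), I get $\angle BCA\equiv\angle EFD$, with supplements handled by \ref{SA} when $H$ lies beyond $C$. Then SAA, with side $AC$ and the angles at $A$ and $C$, closes the proof. The degenerate subcases $H=C$ or $K=F$ are handled by uniqueness of perpendiculars (Theorem \ref{PEN_HA}). I expect the bookkeeping of configurations (acute/obtuse, $H$ inside or outside $AC$) to be the tedious part, and the leg-leg step the conceptually delicate one.
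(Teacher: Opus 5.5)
The gap is in the step you yourself single out: the leg--leg congruence of $\triangle BHC$ and $\triangle EKF$. \ref{HA} infers congruence from a congruent \emph{hypotenuse} and a congruent acute angle. In both of your subcases the hypotenuse ($BC$ against $EF$, resp.\ against $E'F$) is exactly what is unknown.

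\emph{Subcase $\angle HCB\equiv\angle KFE$.} Your data are the right angle, both legs, and the angle at $C$, $F$. These fit ASA or SAA, but not \ref{HA}. In case (ii), SAA with the leg $BH\equiv EK$ opposite $\angle C$, $\angle F$ does rescue this subcase, since right angles are congruent by Theorem \ref{PEN_EU4P}. In case (i), no assumed criterion applies.

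\emph{Subcase $\angle HCB<\angle KFE$.} Comparing $\triangle E'KF$ with $\triangle BHC$ uses the right angle, the \emph{included} side $KF\equiv HC$, and the angle at $F$. That is ASA, so neither \ref{HA} nor SAA yields $E'K\equiv BH$. The argument preceding Theorem \ref{PEN_ASA} that you imitate works only because ASA is assumed there. The point $F'$ you lay off on $\overrightarrow{EF}$ is never used.

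The same slip recurs at the end of case (ii): $AC$ is included between the angles at $A$ and $C$, so you need the side $AB$, which is opposite $\angle C$. Since leg--leg is, as you note, SAS for right triangles, the proof is missing its core.

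The step can be repaired with \ref{HA} alone by fixing the hypotenuse instead of the leg.
\begin{enumerate}
\item On the side of $\overleftrightarrow{FK}$ containing $E$, take the ray at $F$ forming with $\overrightarrow{FK}$ an angle congruent to $\angle BCH$.
\item Mark $E^{*}$ on that ray with $FE^{*}\equiv CB$.
\item Drop the perpendicular from $E^{*}$ to $\overleftrightarrow{FK}$, with foot $K^{*}$.
\item \ref{HA} gives $FK^{*}\equiv CH\equiv FK$ and $K^{*}E^{*}\equiv HB\equiv KE$.
\item Hence $K^{*}=K$, since both lie on $\overrightarrow{FK}$ by acuteness, as in the paper.
\item Then $E^{*}=E$ by uniqueness of the perpendicular at $K$ (which follows from [\textbf{RCT}] via Proposition \ref{PEN_CP12}) and \ref{C1}, so $EF\equiv BC$.
\end{enumerate}

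This coincidence-of-feet device is exactly what drives the paper's proof. The paper, however, drops the altitude from the vertex $A$ of the given angle onto $BC$. It then proves $\angle PAC\equiv\angle QDF$ by contradiction: otherwise the feet $I$, $L$ on $\overleftrightarrow{DP'}$ would have to coincide.

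The paper's choice also avoids a second weak point of your route. When $\angle BAC$ is right or obtuse, you need \ref{SA}: to know $\angle EDF$ is right, or that the supplements at $A$ and $D$ are congruent. \ref{SA} is not a hypothesis of case (i). Theorem \ref{PEN_EU4P} itself requires \ref{SA}, and ``working at vertex $C$'' does not help, since no angle at $C$ is known to be congruent to one at $F$. In the paper, [\textbf{RCT}] transports the splitting of the angle at $A$ by the altitude, so no supplement of $\angle BAC$ is ever needed.
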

\begin{proof}
Let the two triangles $\triangle ABC$ and $\triangle DEF$ be given such that
\begin{equation*}
AB \equiv DE,\quad AC\equiv DF,\quad\angle BAC\equiv\angle EDF.
\end{equation*}
We want to prove $\triangle ABC\equiv\triangle DEF$.

We observe that, by Theorem \ref{PEN_LAA_HA}, in case (ii) we can deduce \ref{HA} from SAA, [\textbf{RCT}], \ref{SA}.

Draw the perpendicular through $A$ to side $BC$ and let $P$ be its foot. Draw the
perpendicular through $D$ to side $EF$ and let $Q$ be its foot.

If $\angle PAC\equiv\angle QDF$, then, by hypothesis $AC\equiv DF$, applying \ref{HA}
gives $\triangle PAC\equiv\triangle QDF$ and, in particular, $PC\equiv QF$ and
$\angle ACB\equiv\angle DFE$.

Let us analyze the point ordering, taking into account the hypotheses
$\angle PAC\equiv\angle QDF$ and $\angle BAC\equiv\angle EDF$:
\begin{enumerate}[label=(\arabic*)]
  \item If $B-P-C$, then $\angle PAB<\angle BAC$, because $\overrightarrow{AP}$ is
  inside $\angle BAC$. By [\textbf{RCT}], $\overrightarrow{DQ}$ is inside
  $\angle EDF$, hence $E-Q-F$.
  \item If $P-B-C$, then $\angle PAB>\angle BAC$, because $\overrightarrow{AB}$ is
  inside $\angle PAC$. By [\textbf{RCT}], $\overrightarrow{DE}$ is inside
  $\angle QDF$, hence $Q-E-F$.
\end{enumerate}
By $\angle BAC\equiv\angle EDF$ and $\angle PAC\equiv\angle QDF$, applying
\ref{RCT}, in both cases (1) and (2) we have $\angle PAB\equiv\angle QDE$, and by
hypothesis $AB\equiv DE$, applying \ref{HA} gives $\triangle PAB\equiv\triangle QDE$,
and, in particular, $BP\equiv EQ$ and $\angle ABC\equiv\angle DEF$. Segment addition
or subtraction gives, respectively in cases (1) and (2), $BC\equiv EF$, and finally
$\triangle ABC\equiv\triangle DEF$. We do not analyze $B-C-P$ explicitly, since it is
analogous to $P-B-C$ by exchanging $B$ with $C$.

If instead $\angle PAC\not\equiv \angle QDF$, let us analyze separately the two cases 
$B-P-C$ and $P-B-C$, case $P-C-B$ will follow by exchanging $B$ with $C$ in the second case.

If $B-P-C$, let us assume without loss of generality $\angle PAC >\angle QDF$ 
(see Figure \ref{PEN_LLL_LAL_f}). Then $\overrightarrow{AP}$ is inside $\angle BAC$, 
and by $\angle BAC\equiv\angle EDF$ and [\textbf{RCT}], $\overrightarrow{DQ}$ is inside 
$\angle EDF$, hence $E-Q-F$. By \ref{C4}, there exists a unique $P'$ with $E-P'-Q$ such 
that $\angle P'DF\equiv \angle PAC$. We then draw the perpendicular to $\overleftrightarrow{DP'}$ 
through $F$, with foot $I$, and the perpendicular through $E$, with foot $L$. We have $I\neq L$.
Indeed, if $I=L$, then $F$, $I$, $L$ and $E$ would be collinear on $\overleftrightarrow{EF}$ with $I=L=P'$. 
Therefore $\overleftrightarrow{DP'}$ is orthogonal to $\overleftrightarrow{EF}$. 
It follows that there are two perpendiculars to $\overleftrightarrow{EF}$ through $D$, 
contradicting Theorem \ref{PEN_HA}. Hence $I\neq L$.

Since $B-P-C$, the right angle $\angle APC$ is congruent to its supplement
$\angle APB$ with respect to $PC$; by Theorem \ref{PEN_HA_EA}, the angle opposite $PC$ in
$\triangle APC$, namely $\angle PAC$, is acute. Symmetrically, $\angle PAB$ is acute. By
\ref{RCT} (as before), $\angle P'DF\equiv\angle PAC$ and $\angle EDP'\equiv\angle PAB$, hence
both are acute as well (Proposition \ref{PEN_CP12}). Both feet $I$ and $L$ therefore lie on
the same ray from $D$ along $\overleftrightarrow{DP'}$.
Applying \ref{HA} gives $\triangle APC\equiv\triangle DIF$ and $\triangle APB\equiv
\triangle DLE$, whence $DI\equiv AP\equiv DL$. Since $I$ and $L$ lie on the same ray from
$D$, \ref{C1} forces $I=L$, contradicting $I\neq L$ established above. It follows that
$\angle PAC\not\equiv\angle QDF$ cannot hold, and therefore the assertion is proved.

If $P-B-C$, let us assume without loss of generality $\angle PAC <\angle QDF$ (see Figure \ref{PEN_LLL_LAL_f2}). 
Then $\angle QDF>\angle PAC>\angle BAC\equiv \angle EDF$, hence by angle ordering  $\overrightarrow{DE}$ is inside
$\angle QDF$, hence $Q-E-F$. By \ref{C4} we identify a unique ray $\overrightarrow{DP'}$ on the same side of $Q$ with respect 
to $\overleftrightarrow{DF}$ such that $\angle P'DF\equiv\angle PAC$. By the crossbar theorem we can directly assume $P'$  to be the intersection of $\overrightarrow{DP'}$ with $QF$, and from $\angle P'DF\equiv\angle PAC>\angle EDF$ it follows $P'-E-F$.  We then draw the perpendicular to $\overleftrightarrow{DP'}$
through $F$, with foot $I$, and the perpendicular through $E$, with foot $L$. We have $I\neq L$.
Indeed, if $I=L$, then $F$, $I$, $L$ and $E$ would be collinear on $\overleftrightarrow{EF}$ with $I=L=P'$.
Therefore $\overleftrightarrow{DP'}$ is orthogonal to $\overleftrightarrow{EF}$.
It follows that there are two perpendiculars to $\overleftrightarrow{EF}$ through $D$,
contradicting Theorem \ref{PEN_HA}. Hence $I\neq L$.

$\angle P'DE$ and $\angle P'DF$ are both acute angles being $<\angle QDF$ which is acute (Proposition \ref{PEN_CP12}). Both feet $I$ and $L$ therefore lie on
the same ray from $D$ along $\overleftrightarrow{DP'}$. Applying \ref{HA} gives $\triangle APC\equiv\triangle DIF$ and $\triangle APB\equiv
\triangle DLE$, whence $DI\equiv AP\equiv DL$. Since $I$ and $L$ lie on the same ray from
$D$, \ref{C1} forces $I=L$, contradicting $I\neq L$ established above. It follows that
$\angle PAC\not\equiv\angle QDF$ cannot hold, and therefore the assertion is proved.
\end{proof}
\begin{figure}
\centering
\begin{tikzpicture}
\coordinate (B) at (0,0);
\coordinate (C) at (5,0.5);
\coordinate (A) at (1.75,2.75);

\coordinate (E) at (8,0);
\coordinate (F) at (13,0.5);
\coordinate (D) at (9.75,2.75);

\draw[very thick] (A) -- (B) -- (C) -- cycle;
\draw[very thick] (D) -- (E) -- (F) -- cycle;

\fill (A) circle (1.5pt) node[above] {$A$};
\fill (B) circle (1.5pt) node[left] {$B$};
\fill (C) circle (1.5pt) node[right] {$C$};

\fill (D) circle (1.5pt) node[above] {$D$};
\fill (E) circle (1.5pt) node[left] {$E$};
\fill (F) circle (1.5pt) node[right] {$F$};
\coordinate (P) at ($(B)!(A)!(C)$);
\draw[dashed] (A) -- (P);

\fill (P) circle (1.5pt) node[below] {$P$};
\draw[densely dotted] (A) -- (P);
\coordinate (Q) at ($(E)!(D)!(F)$);
\draw[dashed] (D) -- (Q);

\fill (Q) circle (1.5pt) node[below] {$\quad Q$};
\draw[densely dotted] (D) -- (Q);
\coordinate (Pprime) at ($(E)!0.75!(Q)$);
\fill (Pprime) circle (1.5pt) node[below] {$\quad P'$};
\coordinate (I) at ($(D)!(F)!(Pprime)$);
\coordinate (L) at ($(D)!(E)!(Pprime)$);

\draw[densely dotted] (F) -- (I);
\draw[densely dotted] (E) -- (L);

\fill (I) circle (1.5pt) node[below] {$I\quad$};
\fill (L) circle (1.5pt) node[below] {$L\quad$};

\draw[densely dotted] (D) -- (L);

\PerpMark{A}{B}{0.5}{0.05}{thin}
\PerpMark{D}{E}{0.5}{0.05}{thin}
%
%
\PerpMark{A}{C}{0.49}{0.05}{thin}
\PerpMark{A}{C}{0.51}{0.05}{thin}
\PerpMark{D}{F}{0.49}{0.05}{thin}
\PerpMark{D}{F}{0.51}{0.05}{thin}
\pic[draw, angle radius=0.35cm] {angle=B--A--C};
\pic[draw, angle radius=0.35cm] {angle=E--D--F};

\pic[draw, angle radius=0.65cm] {angle=P--A--C};
\pic[draw, angle radius=0.70cm] {angle=P--A--C};

\pic[draw, angle radius=0.65cm] {angle=Pprime--D--F};
\pic[draw, angle radius=0.70cm] {angle=Pprime--D--F};
\end{tikzpicture}
\caption{Theorem \ref{PEN_LLL_LAL}-Proof schematic, case $B-P-C$}\label{PEN_LLL_LAL_f}
\end{figure}
\begin{figure}
\centering
\begin{tikzpicture}[scale=0.75]
\coordinate (B) at (0,0);
\coordinate (C) at (5,0.5);
\coordinate (A) at (-2.789,2.636);

\coordinate (E) at (8,0);
\coordinate (F) at (13,0.5);
\coordinate (D) at (5.211,2.636);

\draw[very thick] (A) -- (B) -- (C) -- cycle;
\draw[very thick] (D) -- (E) -- (F) -- cycle;

\fill (A) circle (1.5pt) node[above] {$A$};
\fill (B) circle (1.5pt) node[above right] {$B$};
\fill (C) circle (1.5pt) node[right] {$C$};

\fill (D) circle (1.5pt) node[above] {$D$};
\fill (E) circle (1.5pt) node[above right] {$E$};
\fill (F) circle (1.5pt) node[right] {$F$};
\coordinate (P) at ($(B)!(A)!(C)$);
\draw[dashed] (B) -- (P);

\fill (P) circle (1.5pt) node[below] {$P$};
\draw[densely dotted] (A) -- (P);
\coordinate (Q) at ($(E)!(D)!(F)$);
\draw[dashed] (E) -- (Q);

\fill (Q) circle (1.5pt) node[below] {$Q$};
\draw[densely dotted] (D) -- (Q);
\coordinate (Pprime) at ($(Q)!0.55!(E)$);
\fill (Pprime) circle (1.5pt) node[below] {$P'$};
\coordinate (I) at ($(D)!(F)!(Pprime)$);
\coordinate (L) at ($(D)!(E)!(Pprime)$);

\draw[densely dotted] (F) -- (I);
\draw[densely dotted] (E) -- (L);

\fill (I) circle (1.5pt) node[below] {$I$};
\fill (L) circle (1.5pt) node[above right] {$L$};

\draw[densely dotted] (D) -- (I);
\PerpMark{A}{B}{0.5}{0.05}{thin}
\PerpMark{D}{E}{0.5}{0.05}{thin}
\PerpMark{A}{C}{0.495}{0.025}{thin}
\PerpMark{A}{C}{0.505}{0.025}{thin}
\PerpMark{D}{F}{0.495}{0.025}{thin}
\PerpMark{D}{F}{0.505}{0.025}{thin}
%
%

\pic[draw, angle radius=0.35cm] {angle=B--A--C};
\pic[draw, angle radius=0.35cm] {angle=E--D--F};

\pic[draw, angle radius=0.65cm] {angle=P--A--C};
\pic[draw, angle radius=0.70cm] {angle=P--A--C};

\pic[draw, angle radius=0.65cm] {angle=Pprime--D--F};
\pic[draw, angle radius=0.70cm] {angle=Pprime--D--F};
\end{tikzpicture}
\caption{Theorem \ref{PEN_LLL_LAL}-Proof schematic, case $P-B-C$}\label{PEN_LLL_LAL_f2}
\end{figure}
\section{Metamathematical Analysis}\label{PEN_s4}

Working within the axiomatic system $\MG^{-}$ introduced in Section~\ref{PEN_s2}, Theorem
\ref{PEN_ASA} yields
\begin{equation}\label{PEN_ALA}
\textrm{ASA},\; \textrm{[\textbf{RCT}]}
\vdash
\textrm{SAS}.
\end{equation}
Likewise, Theorem \ref{PEN_LLL_LAL} of Section~\ref{PEN_s3} yields
\begin{equation}\label{PEN_LLL}
\textrm{SSS},\;\textrm{[\textbf{RCT}]},\;\ref{MS},\;\ref{HA}\;\vdash\;\textrm{SAS}
\end{equation}
and
\begin{equation}\label{PEN_LAA}
\textrm{SAA},\;\textrm{[\textbf{RCT}]},\;\ref{AB},\;\ref{SA},\;[\mathbf{PA}]\;\vdash\;\textrm{SAS}.
\end{equation}
The following deductive equivalences follow immediately:
\begin{equation}\label{PEN_EALA}
\textrm{SAS}
\;\dashv\vdash\;
\textrm{ASA},\;\textrm{[\textbf{RCT}]},
\end{equation}
\begin{equation}\label{PEN_ESSS}
\textrm{SAS}
\;\dashv\vdash\;
\textrm{SSS},\;\textrm{[\textbf{RCT}]},\;\ref{MS},\;\ref{HA}
\end{equation}
and
\begin{equation}\label{PEN_ESAA}
\textrm{SAS}
\;\dashv\vdash\;
\textrm{SAA},\;\textrm{[\textbf{RCT}]},\;\ref{AB},\;\ref{SA},\;[\mathbf{PA}].
\end{equation}
The right-to-left direction of each equivalence relies on the proofs given above; the
left-to-right direction relies on the classical results already available in $\MG$ once SAS is
assumed \cite{Hilbert1950}.

These deductive equivalences offer a first comparison among the triangle congruence criteria
within $\MG^{-}$. To complement this analysis, we now turn to suitable models, which will let
us determine which principles and criteria can, or cannot, be derived from the remaining
axioms.

Following \cite{Hilbert1950}, the Cartesian plane $\mathbb{R}^{2}$ of elementary analytic
geometry is a model of $\MG$. Its points are the elements $(x,y)$ of $\mathbb{R}^{2}$, and its
lines are the sets defined by the parametric equations
\begin{equation}\label{PEN_s4_e1}
x=\lambda t+x_{0},\qquad y=\mu t+y_{0}.
\end{equation}
This choice of points and lines carries the full affine structure of the model and satisfies
the incidence axioms [\textbf{I}] and the order axioms [\textbf{O}]. Congruence between angles
is defined via angular measure, two angles being declared congruent exactly when their measures
coincide; congruence between segments is defined analogously via Euclidean length. Under these
definitions, axioms \ref{C1}--\ref{C5} and SAS/\ref{C6} all hold.

We denote this model, based on ordinary analytic geometry, by $\mathbb{E}^{2}$. We do not
verify here that it satisfies the various axioms of $\MG$ and $\MG^{-}$ -- an elementary
exercise in plane analytic geometry, carried out in detail by Hilbert himself
\cite[\S 9]{Hilbert1950} -- but simply record that, as a consequence, every result derived
from $\MG$ or $\MG^{-}$ holds in $\mathbb{E}^{2}$.
In particular, this model provides a consistency interpretation for all the principles examined
in the present analysis.

We now modify $\mathbb{E}^{2}$ by changing how segment length is defined, obtaining a new model
that we denote $\mathbb{E}_{H}^{2}$. This construction adapts to the plane the counterexample
that Hilbert used in \cite[\S 11]{Hilbert1950} to establish the independence of SAS from the
remaining congruence axioms.

Given two points $A_{1}(x_{1},y_{1})$ and $A_{2}(x_{2},y_{2})$ of the plane, define the length
of the segment $A_1A_2$ by
\begin{equation*}
L(A_{1}A_{2})=
\sqrt{(x_{1}-x_{2}+y_{1}-y_{2})^{2}+(y_{1}-y_{2})^{2}},
\end{equation*}
and declare two segments congruent when they share the same length in this sense. Segment
transport (\ref{C1}) and angle transport (\ref{C4}) still hold.

Segment addition (\ref{C3}) also holds: if a line is parametrized as in \eqref{PEN_s4_e1} and
$A_1,A_2,A_3$ correspond to parameters $t_1<t_2<t_3$, then
\begin{align*}
L(A_{1}A_{2})&=(t_{2}-t_{1})\sqrt{(\lambda+\mu)^{2}+\mu^{2}},\\
L(A_{2}A_{3})&=(t_{3}-t_{2})\sqrt{(\lambda+\mu)^{2}+\mu^{2}},\\
L(A_{1}A_{3})&=(t_{3}-t_{1})\sqrt{(\lambda+\mu)^{2}+\mu^{2}},
\end{align*}
and consequently
\begin{equation*}
L(A_{1}A_{3})=L(A_{1}A_{2})+L(A_{2}A_{3}).
\end{equation*}

We now show that SAS fails in $\mathbb{E}_{H}^{2}$. Consider the points
\begin{equation*}
O=(0,0),\qquad
A=(1,0),\qquad
B=(0,1),\qquad
C=\left(\frac12,\frac12\right),
\end{equation*}
shown in Figure~\ref{PEN_Hff}, and the triangles $\triangle OAC$ and $\triangle OBC$. Since $OC$
is shared by both, $OC\equiv OC$; moreover
\begin{align*}
L(AC)&=\sqrt{\left(\frac12-\frac12\right)^2+\left(-\frac12\right)^2}
=\frac12,\\
L(BC)&=\sqrt{\left(-\frac12+\frac12\right)^2+\left(\frac12\right)^2}
=\frac12,
\end{align*}
so $AC\equiv BC$. By elementary analytic geometry, the two angles at $C$ -- $\angle OCA$ and
$\angle BCO$ -- are both right angles, so the two triangles satisfy the SAS hypotheses.

\begin{figure}[ht!]
\centering
\begin{tikzpicture}[scale=4]

\coordinate (O) at (0,0);
\coordinate (A) at (1,0);
\coordinate (B) at (0,1);
\coordinate (C) at (.5,.5);

\draw[thick] (O)--(A)--(B)--cycle;
\draw[thick] (O)--(C);

\fill (O) circle (0.4pt);
\fill (A) circle (0.4pt);
\fill (B) circle (0.4pt);
\fill (C) circle (0.4pt);

\node[below left] at (O) {$O(0,0)$};
\node[below] at (A) {$A(1,0)$};
\node[left] at (B) {$B(0,1)$};
\node[right] at (C) {$C\left(\frac12,\frac12\right)$};

\end{tikzpicture}
\caption{Counterexample to SAS, ASA, and SAA.}
\label{PEN_Hff}
\end{figure}

Yet the third sides tell a different story:
\begin{align*}
L(OA)&=\sqrt{(-1)^2}=1,\\
L(OB)&=\sqrt{(-1)^2+(-1)^2}=\sqrt2,
\end{align*}
so $OA\not\equiv OB$, and SAS fails in the model. By the soundness theorem
\cite[Sec.~2.5, pp.~131--134]{Enderton2001}, it follows that SAS is not provable in $\MG^{-}$.

The same triangles show that ASA and SAA fail as well. By elementary analytic geometry, the two
angles at $O$ are both $45^\circ$, so $\angle COB\equiv\angle COA$. Taking the shared side $CO$
together with its adjacent angles gives congruent angles on both sides, yet $OA\not\equiv OB$:
ASA fails. Taking instead $AC\equiv BC$ together with the pairs of angles $90^\circ$ and
$45^\circ$ gives the hypotheses of SAA, again with $OA\not\equiv OB$: SAA fails too. By the
soundness theorem once more, SAA is not provable in $\MG^{-}$.

We turn next to a counterexample for SSS. Consider the triangles $\triangle ABC$ and
$\triangle BCF$ (Figure~\ref{PEN_H_LLLf}), where
\begin{align*}
A&=(-2,-2),&
B&=(-2,-1),&
C&=(-1,-2),&
F&=(0,-2).
\end{align*}

\begin{figure}[ht!]
\centering
\begin{tikzpicture}[scale=2.0,thick]

\coordinate[label=below left:$A$] (A) at (-2,-2);
\coordinate[label=above left:$B$] (B) at (-2,-1);
\coordinate[label=below:$C$] (C) at (-1,-2);
\coordinate[label=below right:$F$] (F) at (0,-2);

\draw (A)--(B)--(C)--cycle;
\draw (B)--(C)--(F)--cycle;

\end{tikzpicture}
\caption{Counterexample to SSS.}
\label{PEN_H_LLLf}
\end{figure}

Computing side lengths with $L$, for $\triangle ABC$ we find
\begin{align*}
L(AB)
&=
\sqrt{(0+1)^2+1^2}
=\sqrt2,\\
L(AC)
&=
\sqrt{(1+0)^2}
=1,\\
L(BC)
&=
\sqrt{(1-1)^2+(-1)^2}
=1,
\end{align*}
and for $\triangle BCF$,
\begin{align*}
L(BC)
&=1,\\
L(BF)
&=
\sqrt{(2-1)^2+(-1)^2}
=\sqrt2,\\
L(CF)
&=
\sqrt{(1+0)^2}
=1.
\end{align*}
Thus, up to a correspondence of vertices,
\begin{equation*}
AB\equiv BF,\qquad
BC\equiv BC,\qquad
AC\equiv CF,
\end{equation*}
and the SSS hypotheses are satisfied. The two triangles are nevertheless not congruent: the
angles of $\triangle ABC$ measure $45^\circ$, $45^\circ$, and $90^\circ$, while none of the
angles of $\triangle BCF$ is right. Since angle congruence here coincides with that of ordinary
Euclidean geometry,
\begin{equation*}
\triangle ABC\not\equiv\triangle BCF,
\end{equation*}
even though the SSS hypotheses hold. Hence SSS is not provable in $\MG^{-}$.

The same configuration also settles [\textbf{PA}]: the triangle $\triangle ABF$ is isosceles,
since $L(AB)=L(BF)=\sqrt2$ gives $AB\equiv BF$, yet $\angle BAF\not\equiv\angle BFA$, because
$\angle BAF$ is a right angle while $\angle BFA$ is not ($AF$ is not orthogonal to $BF$). Thus
[\textbf{PA}] fails in the model, and is therefore not provable in $\MG^{-}$.

Since angle congruence in $\mathbb{E}_{H}^{2}$ coincides with that of $\mathbb{E}^{2}$, every
purely angular result established earlier in this chapter continues to hold in
$\mathbb{E}_{H}^{2}$; in particular, [\textbf{RCT}] is readily seen to hold there; in particular, the entire ordering theory of Proposition \ref{PEN_CP12} transfers automatically, since its proof relies only on angle congruence. We thus
obtain, within $\MG^{-}$,
\begin{equation*}
\textrm{[\textbf{RCT}]}
\;\not\vdash\;
\textrm{SAS, SAA, SSS, [\textbf{PA}]}.
\end{equation*}
In other words, although [\textbf{RCT}] is compatible with $\MG^{-}$, it is on its own too weak
to reconstruct any of the triangle congruence criteria considered here. By contrast, we have
seen that adjoining ASA to [\textbf{RCT}] suffices to recover SAS, as recorded in the
equivalence \eqref{PEN_EALA}.

This equivalence does not, however, settle whether ASA alone -- without the help of
[\textbf{RCT}] -- already implies SAS; it therefore says nothing about the relative deductive
strength of the two criteria within the present theory. The situation can differ elsewhere: in
the metric framework adopted by \cite{Martin1982}, ASA and SAS are deductively equivalent outright,
since the proof of Theorem 16.5 there shows that ASA implies SAS, while the converse (SAS
implies ASA) already holds in that theory.

We conclude by examining the auxiliary principles \ref{MS}, \ref{AB}, [\textbf{SA}], and
\ref{HA}, which entered the reconstructions based on SSS and SAA. Since $\mathbb{E}^{2}$
satisfies every result of elementary analytic geometry, all of these principles are consistent
with $\MG$.

In the Hilbert model $\mathbb{E}_{H}^{2}$, \ref{MS}, \ref{AB}, and [\textbf{SA}] continue to
hold. For \ref{MS}: since the length of segments on a common line is proportional to the
difference of the corresponding parameters $t$, the midpoint corresponds simply to the average
of the endpoints' parameters. For \ref{AB}: angle measure coincides with that of elementary
analytic geometry, so every angle has a bisector. For [\textbf{SA}]: angle congruence again
coincides with the ordinary one, so supplements of congruent angles remain congruent.

A specific counterexample in $\mathbb{E}_{H}^{2}$ shows, however, that \ref{HA} fails there.
We now exhibit explicitly the counterexample to \ref{HA} announced above. Consider the
triangles $\triangle O_1P_1Q_1$ and $\triangle O_2P_2Q_2$ (Figure~\ref{PEN_H_HAf}), where
\begin{align*}
O_1&=(0,0), & P_1&=(1,0), & Q_1&=(0,1),\\
O_2&=(2,0), & P_2&=\left(\tfrac52,\tfrac12\right), & Q_2&=\left(\tfrac32,\tfrac12\right).
\end{align*}
By elementary analytic geometry, both triangles have a right angle at $O_1$, respectively
$O_2$, and both are isosceles right triangles: the acute angles at $P_1,Q_1$ and at $P_2,Q_2$
all equal $45^\circ$.

\begin{figure}[ht!]
\centering
\begin{tikzpicture}[scale=2.2,thick]

\coordinate[label=below:$O_1$] (O1) at (0,0);
\coordinate[label=below:$P_1$] (P1) at (1,0);
\coordinate[label=left:$Q_1$] (Q1) at (0,1);

\coordinate[label=below:$O_2$] (O2) at (2,0);
\coordinate[label=right:$P_2$] (P2) at (2.5,0.5);
\coordinate[label=left:$Q_2$] (Q2) at (1.5,0.5);

\draw (O1)--(P1)--(Q1)--cycle;
\draw (O2)--(P2)--(Q2)--cycle;

\fill (O1) circle (0.5pt);
\fill (P1) circle (0.5pt);
\fill (Q1) circle (0.5pt);
\fill (O2) circle (0.5pt);
\fill (P2) circle (0.5pt);
\fill (Q2) circle (0.5pt);

\end{tikzpicture}
\caption{Counterexample to HA: both triangles are Euclidean $45$-$45$-$90$ triangles with
$L(P_1Q_1)=L(P_2Q_2)=1$, yet their legs differ.}
\label{PEN_H_HAf}
\end{figure}

We compute the relevant lengths using $L$. For $\triangle O_1P_1Q_1$:
\begin{align*}
L(O_1P_1)&=\sqrt{(1)^2}=1,\\
L(O_1Q_1)&=\sqrt{(1)^2+(1)^2}=\sqrt2,\\
L(P_1Q_1)&=\sqrt{(-1+1)^2+1^2}=1.
\end{align*}
For $\triangle O_2P_2Q_2$:
\begin{align*}
L(O_2P_2)&=\sqrt{\left(\tfrac12+\tfrac12\right)^2+\left(\tfrac12\right)^2}
=\sqrt{\tfrac54}=\frac{\sqrt5}{2},\\
L(O_2Q_2)&=\sqrt{\left(-\tfrac12+\tfrac12\right)^2+\left(\tfrac12\right)^2}=\frac12,\\
L(P_2Q_2)&=\sqrt{(1)^2}=1.
\end{align*}

Both triangles therefore satisfy the hypotheses of \ref{HA}: a right angle
($\angle P_1O_1Q_1\equiv\angle P_2O_2Q_2$, both $90^\circ$), a congruent acute angle
($\angle O_1P_1Q_1\equiv\angle O_2P_2Q_2$, both $45^\circ$), and a congruent hypotenuse
($L(P_1Q_1)=L(P_2Q_2)=1$). Yet the legs are not congruent:
\begin{equation*}
L(O_1P_1)=1 \ \ne\ \frac{\sqrt5}{2}=L(O_2P_2), \qquad
L(O_1Q_1)=\sqrt2 \ \ne\ \frac12=L(O_2Q_2),
\end{equation*}
so $\triangle O_1P_1Q_1\not\equiv\triangle O_2P_2Q_2$. Hence \ref{HA} fails in
$\mathbb{E}_{H}^{2}$, and \ref{HA} is not provable in $\MG^{-}$.

Consequently, \ref{HA} -- together with [\textbf{PA}] and all the triangle congruence criteria
-- cannot be derived from \ref{MS}, \ref{AB}, [\textbf{RCT}], and [\textbf{SA}]. We thus
obtain, within $\MG^{-}$,
\begin{equation}\label{PEN_IND}
\textrm{\ref{MS}, \ref{AB}, [\textbf{SA}], [\textbf{RCT}]}
\;\not\vdash\;
\textrm{SAS, SAA, SSS, [\textbf{PA}], \ref{HA}}.
\end{equation}
\section{Conclusion}
For each of the congruence criteria ASA, SSS and SAA, we have identified a set of auxiliary principles that, together with the assumed criterion, allow us to deduce SAS.
The analysis of the models first makes it possible to distinguish the principles considered into two groups: those satisfied by the model $\mathbb{E}_{H}^{2}$,
\begin{equation*}
\textrm{[\textbf{RCT}]},\; \ref{MS},\; \ref{AB},\; \ref{SA},
\end{equation*}
and those that are not satisfied,
\begin{equation*}
\textrm{[\textbf{PA}]},\; \ref{HA}.
\end{equation*}
The latter are therefore independent of the axiomatic system consisting of $\MG^{-}$ and the principles satisfied by $\mathbb{E}_{H}^{2}$. Within both groups, a structural distinction can also be observed: some principles concern a single geometric individual, whereas others establish relations between distinct individuals.

Although we cannot rule out the existence in $\MG^{-}$ of the two synthetic proofs
\begin{equation*}
\textrm{SSS},\ [\textbf{RCT}],\; \ref{MS}\;\vdash\;\ref{HA},
\end{equation*}
and
\begin{equation*}
\textrm{SAA},\; [\textbf{RCT}],\ \ref{AB},\; [\textbf{SA}]\;\vdash\;\textrm{[PA]},
\end{equation*}
the different nature of the principles involved can provide a qualitative indication of their deductive strength. In the first case, SSS requires the principle \ref{HA}, which connects angles and segments belonging to distinct triangles; in the second, SAA requires the principle [\textbf{PA}], which describes a relation between angles and segments of the same triangle. This difference can be interpreted qualitatively as an indication of a lower deductive strength of SSS compared with SAA, while not allowing us to establish a formal relation of greater or lesser deductive power.

It is worth noting that this distinction in how \ref{HA} is reached does not translate
into two separate final arguments: once \ref{HA} is available -- whether assumed directly, as
in the SSS case, or derived via the congruence of all right angles (Theorem
\ref{PEN_EU4P}), as in the SAA case -- the remainder of the proof of Theorem
\ref{PEN_LLL_LAL} is identical for both criteria. In this sense, the reconstructions from
SSS and from SAA are not two independent developments but a single argument reached
through two different routes to the same intermediate result.

Among the three reconstructions, that of ASA nonetheless rests on a distinctly more secure
footing than the other two. Since $\mathbb{E}_{H}^{2}$ satisfies [\textbf{RCT}] while ASA fails
there, the soundness theorem shows definitively that
$\textrm{[\textbf{RCT}]}\not\vdash\textrm{ASA}$ in $\MG^{-}$: the reconstruction
\eqref{PEN_EALA} genuinely requires ASA, not merely [\textbf{RCT}] in disguise. No analogous
certainty is available for SSS or SAA. Since $\mathbb{E}_{H}^{2}$ satisfies neither SSS nor SAA,
it cannot address whether \ref{HA} is truly indispensable alongside SSS, [\textbf{RCT}], and
\ref{MS}, nor whether [\textbf{PA}] is truly indispensable alongside SAA, [\textbf{RCT}],
\ref{AB}, and [\textbf{SA}]: both questions, unlike the corresponding one for ASA, remain
entirely open, not merely unlikely to hold. In this precise sense, ASA occupies the highest
position among the three reconstructions considered here.

We mention, finally, that we also explored other candidate models in the course of this
investigation. Among these, a "moving protractor" model -- in which angle congruence is
redefined vertex by vertex through an arbitrary relabeling \cite{Brown2019}, while segment congruence remains
Euclidean -- and several variants of $\mathbb{E}_{H}^{2}$ obtained by replacing the matrix $M$
underlying the length distortion with other non-orthogonal matrices. In none of these attempts
did we find a model satisfying $\MG^{-}$ in which SAS fails while one of ASA, SSS, or SAA holds:
in every case examined, the failure of SAS was accompanied by the simultaneous failure of all
three remaining criteria. We report this only as a remark on the exploratory work underlying
the present analysis, not as a further result of the paper; a systematic study of this
phenomenon lies beyond its scope.
\section*{Axioms of $\MG$}
\begin{enumerate}[label=\textrm{[}\textbf{I}\arabic*\textrm{]}]
  \item\label{I1} \emph{For every point $P$ and for every point $Q\neq P$, there exists a unique line $l$ incident with $P$ and $Q$.}

  \item\label{I2} \emph{For every line $l$ there exist at least two distinct points that are incident with $l$.}

  \item\label{I3} \emph{There exist three distinct points with the property that no line is incident with all three of them.}
\end{enumerate}

\begin{enumerate}[label=\textrm{[}\textbf{O}\arabic*\textrm{]}]
  \item\label{O1} \emph{If $A-B-C$, then $A$, $B$, and $C$ are three distinct collinear points, and $C-B-A$.}

  \item\label{O2} \emph{Given any two distinct points $B$ and $D$, there exist points $A$, $C$, and $E$ lying on $BD$ such that
  $A-B-D$, $B-C-D$, and $B-D-E$.}

  \item\label{O3} \emph{If $A$, $B$, and $C$ are three distinct points lying on the same line, then one and only one of the points is between the other two.}

  \item\label{O4} \emph{For every line $l$ and for any three points $A$, $B$, and $C$ not lying on $l$:}
  \begin{enumerate}[label=(\roman*)]
    \item \emph{If $A$ and $B$ are on the same side of $l$ and $B$ and $C$ are on the same side of $l$, then $A$ and $C$ are on the same side of $l$.}

    \item \emph{If $A$ and $B$ are on opposite sides of $l$ and $B$ and $C$ are on opposite sides of $l$, then $A$ and $C$ are on the same side of $l$.}
  \end{enumerate}
\end{enumerate}

\begin{enumerate}[label=\textrm{[}\textbf{C}\arabic*\textrm{]}]
  \item\label{C1} \emph{If $A$ and $B$ are distinct points and if $A'$ is any point, then for each ray $r$ emanating from $A'$ there is a unique point $B'$ on $r$ such that $B'\neq A'$ and $AB\equiv A'B'$.}

  \item\label{C2} \emph{If $AB\equiv CD$ and $AB\equiv EF$, then $CD\equiv EF$. Moreover, every segment is congruent to itself.}

  \item\label{C3} \emph{If $A-B-C$, $A'-B'-C'$, $AB\equiv A'B'$, and $BC\equiv B'C'$, then $AC\equiv A'C'$.}

  \item\label{C4} \emph{Given an angle $\angle BAC$ (where, by definition of ``angle'', $AB$ is not opposite to $AC$), and given any ray $A'B'$ emanating from a point $A'$, then there is a unique ray $A'C'$ on a given side of line $A'B'$ such that
  $\angle B'A'C'\equiv\angle BAC$.}

  \item\label{C5} \emph{If $\angle A\equiv\angle B$ and $\angle A\equiv\angle C$, then $\angle B\equiv\angle C$. Moreover, every angle is congruent to itself.}

  \item\label{C6} \emph{If two sides and the included angle of one triangle are congruent respectively to two sides and the included angle of another triangle, then the two triangles are congruent.}
\end{enumerate}

\end{document}